\newtheorem{thm}{Theorem}[section]
\newtheorem{lem}[thm]{Lemma}
\newtheorem{pro}[thm]{Proposition}
\theoremstyle{definition}
\newtheorem{defi}[thm]{Definition}
\newcommand{\nc}{\newcommand}
\newcommand{\delete}[1]{}
\nc{\mlabel}[1]{\label{#1}}  
\nc{\mcite}[1]{\cite{#1}}  
\nc{\mref}[1]{\ref{#1}}  
\nc{\meqref}[1]{\eqref{#1}}  
\nc{\mbibitem}[1]{\bibitem{#1}} 
\nc{\mlabel}[1]{\label{#1}{\hfill \hspace{1cm}{\bf{{\ }\hfill(#1)}}}}
\nc{\mcite}[1]{\cite{#1}{{\bf{{\ }(#1)}}}}  
\nc{\mref}[1]{\ref{#1}{{\bf{{\ }(#1)}}}}  
\nc{\meqref}[1]{\eqref{#1}{{\bf{{\ }(#1)}}}}  
\nc{\mbibitem}[1]{\bibitem[\bf #1]{#1}} 
\DeclareMathOperator{\im}{Im}
\newcommand {\emptycomment}[1]{}
\nc{\oprn}{\theta}
\newcommand{\emptycomment}[1]{}
\nc{\calo}{\mathcal{O}}
\nc{\oop}{$\mathcal{O}$-operator\xspace}
\nc{\oops}{$\mathcal{O}$-operators\xspace}
\nc{\mrho}{{\bm{\varrho}}}
\nc{\bfk}{\mathbf{K}}
\nc{\invlim}{\displaystyle{\lim_{\longleftarrow}}\,}
\nc{\ot}{\otimes}
\newcommand{\ahd}{\vartriangleleft}
\nc{\eval}[1]{\Big|_{#1}}
\newcommand{\be }{\begin{equation}}
\newcommand{\ee }{\end{equation}}
\nc{\RR}{\mathbb{R}}
\nc{\hC}{\mathcal{C}}
\newcommand{\frkg}{\mathfrak g}
\newcommand{\br}[1]{   [ \cdot,    \cdot  ]   }
\newcommand{\id}{\mathsf{id}}
\newcommand{\wtd}{\widetilde}
\nc{\CV}{\mathbf{C}}
\NewDocumentEnvironment{Thm}{O{thm} D(){} m}
  {\addtocounter{#1}{-1}%
   \expandafter\renewcommand\csname the#1\endcsname{\ref{#3}}%
   \begin{#1}[#2]}
  {\end{#1}}
\begin{document}

\title[Matched pairs of Hopf algebras and Rota-Baxter Hopf algebras]{Matched pairs of Hopf algebras and Rota-Baxter Hopf algebras}

\author{Shukun Wang}
\address{School of Mathematics and Big Data, Anhui University of Science and Technology, Huainan 232001, China; Anhui Province Engineering Laboratory for Big Data Analysis and Early Warning Technology of Coal Mine Safety, Huainan 232001, China}
\email{2024093@aust.edu.cn}


\begin{abstract}
In this paper, we first study Rota-Baxter Hopf algebras of weight $-1$ and construct a matched pair of Hopf algebras on every Rota-Baxter Hopf algebra of weight $-1$. Then we propose the notion of projection homomorphism pairs on a matched pair of Hopf algebras, and show that every projection homomorphism pair $(C,\wtd{C})$ induces a Rota-Baxter Hopf algebra. Conversely, we prove that 
the matched pair of Hopf algebras on a Rota-Baxter Hopf algebra of weight $-1$ $(H,B)$ gives rise to a projection homomorphism pair $(C,\wtd{C})$. Furthermore, we study the Rota-Baxter Hopf algebra structure on $\im C$ that is Rota-Baxter isomorphic to $(H,B)$, and investigate the relationship between the Rota-Baxter Hopf algebra structure on $\im \wtd{C}$ and the descendent Rota-Baxter Hopf algebra $(H_{B},B)$.

\end{abstract}


\keywords{matched pairs of Hopf algebras, Rota-Baxter Hopf algebras, projection homomorphism pairs. \\
\quad  2020 \emph{Mathematics Subject Classification.} 16T05.}

\maketitle


\section{Introduction}

Rota-Baxter operators on Lie algebras and associative algebras play an important role in various fields, such as combinatorics, as seen in the works of F. V. Atkinson and P. Cartier \cite{AK,PC}. Given a Lie algebra $\frkg$, a Rota-Baxter operator of weight $\lambda$ on $\frkg$ is a linear map $B: \frkg \to \frkg$ satisfying
$$[B(x),B(y)] = B([B(x),y] + [x,B(y)] + \lambda [x,y])$$
for any $x,y \in \frkg$. The pair $(\frkg,B)$ is called a Rota-Baxter Lie algebra of weight $\lambda$.

Rota-Baxter operators are closely related to the classical Yang-Baxter equation and integrable systems \cite{BE,STS,S2}, algebraic renormalisation in quantum field theory \cite{AC}, and double Lie algebras \cite{MG1}. Motivated by the Global Factorisation Theorem for Lie groups, the notions of Rota-Baxter operators on groups and Rota-Baxter groups were introduced in \cite{LG1}. It was shown that the differentiation of a Rota-Baxter group gives rise to a Rota-Baxter Lie algebra of weight $1$. Relationships between Rota-Baxter Lie algebras and matched pairs of Lie algebras, as well as between Rota-Baxter groups and matched pairs of groups, were established in \cite{HL}. This line of research has been further advanced in \cite{W}.

Recently, generalising the notions of Rota-Baxter operators on Lie algebras and groups, Rota-Baxter Hopf algebras on cocommutative Hopf algebras were introduced in \cite{MG}. A linear map on a cocommutative Hopf algebra $H$ is called a Rota-Baxter operator of weight $1$ if it is a coalgebra map satisfying
$$B(x)B(y)=B(x_1\,B(x_2)\,y\,S(B(x_3)))$$
for any $x,y\in H$, and the pair $(H,B)$ is called a Rota-Baxter Hopf algebra. In recent years, the theory of Rota-Baxter Hopf algebras has developed rapidly; see \cite{SC,ZH}.

Matched pairs of Hopf algebras were first proposed by W. M. Singer \cite{Sin}. Early works on matched pairs of Hopf algebras include those of M. Takeuchi \cite{MT} and S. Majid \cite{MJ90,MJ}. For recent developments, we refer to \cite{DF,YL}. In 2023, it was shown in \cite{YL23} that every Rota-Baxter Hopf algebra induces a matched pair of Hopf algebras and gives rise to a solution to the Yang-Baxter equation.

The aim of this work is to investigate the relationship between matched pairs of Hopf algebras and Rota-Baxter Hopf algebras. We first introduce Rota-Baxter Hopf algebras of weight $-1$, generalising some results in \cite{MG}. Based on the results of \cite{HL} and \cite{YL23}, we construct a matched pair of Hopf algebras from each Rota-Baxter Hopf algebra of weight $-1$. We then propose the notion of projection homomorphism pairs on matched pairs of Hopf algebras. As one of our main results, we prove that the matched pair of Hopf algebras associated to every Rota-Baxter Hopf algebra of weight $-1$ gives rise to a projection homomorphism pair. Furthermore, we study the Rota-Baxter Hopf algebras induced by such projection homomorphism pairs.

This paper is organised as follows. In Section~\ref{2}, we recall the basic notions of Rota-Baxter Hopf algebras and matched pairs of Hopf algebras, and study the algebraic properties of Rota-Baxter Hopf algebras of weight $-1$. We then show that every Rota-Baxter Hopf algebra $(H,B)$ of weight $-1$ gives rise to a matched pair of Hopf algebras. In Section~\ref{3}, we introduce projection homomorphism pairs on matched pairs of Hopf algebras. We prove that every projection homomorphism pair on a matched pair of Hopf algebras induces a Rota-Baxter Hopf algebra of weight $-1$, and that the matched pair of Hopf algebras induced by a Rota-Baxter Hopf algebra $(H,B)$ of weight $-1$ yields a projection homomorphism pair $(C,\widetilde{C})$. Furthermore, we show that there exists a Rota-Baxter Hopf algebra structure on $\operatorname{Im} C$ which is Rota-Baxter isomorphic to $(H,B)$. Moreover, we study the Baxter Hopf algebra structure on $\operatorname{Im} \widetilde{C}$, and investigate its relationship with the descendant Rota-Baxter Hopf algebra $(H_B,B)$.

\vspace{2mm}
\noindent
{\bf Convention. }

Throughout this paper, the base field is a field $\mathbb{K}$ of characteristic $0.$ Unless otherwise stated, all vector spaces, algebras, and tensor products are taken over $\mathbb{K}.$

For a unital algebra $(A,m,u),$ we denote the multiplication by $m$ and the unit by $u.$

For a coalgebra $(C,\Delta,\epsilon),$ we denote the comultiplication by $\Delta$ and the counit by $\epsilon$. Using the Sweedler notation, we write $\Delta$ simply as $$\Delta(x)=x_1\otimes x_2.$$ Furthermore, for any $n\ge 1$, by the coassociativity of $\Delta,$ we can write
$$\Delta_{n}(x)=(\Delta(x)\otimes \id^{\otimes(n-1)})\cdots (\Delta(x)\otimes \id)\Delta(x).$$

By convention, we denote a Hopf algebra by $H=(H,\cdot,1,\Delta,\epsilon,S).$

 \section{The matched pair of Hopf algebras on a Rota-Baxter Hopf algebra of weight -1}\label{2}
In this section, we first recall some basic notations and preliminaries on matched pairs of Hopf algebras and Rota-Baxter Hopf algebras. Then we investigate the algebraic properties of Rota-Baxter Hopf algebras of weight $-1$. Finally, we prove that every Rota-Baxter Hopf algebra of weight $-1$ gives rise to a matched pair of Hopf algebras.

\subsection{Preliminaries on Matched Pairs of Hopf Algebras}

First, let us recall some basic notions related to Hopf algebras. For further details on Hopf algebras, we refer to \cite{SD}.

Let $(C, \Delta, \epsilon)$ be a coalgebra. A linear map $f: C \to C$ is called a coalgebra map if it satisfies the following conditions:
\begin{itemize}
    \item[(a)] $f(\Delta(x)) = \Delta(f(x)), \quad \forall x \in C$;
    \item[(b)] $\epsilon(f(x)) = \epsilon(x), \quad \forall x \in C$.
\end{itemize}

Let $H$ be a Hopf algebra. $H$ is called cocommutative if:
$$ x_1 \otimes x_2 = x_2 \otimes x_1, \quad \forall x \in H. $$

If $H$ is cocommutative, it follows from \cite{SD} that the antipode $S$ of $H$ is a coalgebra map, and that $S^2 = \id$.

Next, let us recall the definition of \textit{matched pairs of Hopf algebras} given in \cite{MJ}.

\begin{defi}
Let $H$ and $H'$ be Hopf algebras. 
Let $\rhd: H \otimes H' \to H'$ be a left action of $H$ on $H'$, and let $\lhd: H \otimes H' \to H$ be a right action of $H'$ on $H$, such that $H'$ is a left $H$-module coalgebra and $H$ is a right $H'$-module coalgebra with respect to the actions $\rhd$ and $\lhd$ respectively. The quadruple $(H, H', \rhd, \lhd)$ is called a \textbf{matched pair of Hopf algebras} if it satisfies:
\begin{align}
x \rhd 1_{H'} &= \epsilon_H(x)\,1_{H'}, \label{M1}\\
1_H \lhd a &= \epsilon_{H'}(a)\,1_H, \label{M2}\\
x \rhd (ab) &= (x_1 \rhd a_1)\,((x_2 \lhd a_2)\rhd b), \label{M3}\\
(xy) \lhd a &= (x \lhd (y_1 \rhd a_1))\,(y_2 \lhd a_2), \label{M4}
\end{align}
for all $x, y \in H$ and $a, b \in H'$. 

\end{defi}

A matched pair of Hopf algebras $(H, H', \rhd, \lhd)$ induces the \emph{double cross product} $H' \bowtie H$, 
which is a Hopf algebra structure on $H' \otimes H$ endowed with the product 
\[
(a \otimes x)(a' \otimes x') = a\,(x_1 \rhd a'_1) \otimes (x_2 \lhd a'_2)\,x', 
\quad \forall\, a, a' \in H', \ \forall\, x, x' \in H.
\]
and the usual coproduct. The antipode of $H' \bowtie H$ is given by 
\[
S(a \otimes x) = (S(x_1) \rhd S(a_1)) \otimes (S(a_2) \lhd S(x_2)), 
\quad \forall\, a \in H', \ \forall\, x \in H.
\]

\subsection{Rota-Baxter Hopf algebras of weight -1}

Motivated by the notion of \textit{Rota-Baxter operators} of weight $-1$ on groups introduced in \cite{HL} and by the study of \textit{Rota-Baxter Hopf algebras} of weight $1$ in \cite{MG}, now we define Rota-Baxter Hopf algebras of weight $-1$.

\begin{defi}
Let $H$ be a cocommutative Hopf algebra $B: H \to H$ be a linear operator. 
Then $B$ is called a \textbf{Rota-Baxter operator of weight $-1$} on $H$ if it is a coalgebra map and satisfies:
\begin{equation}\label{RBHRS}
B(x) B(y) = B\big( B(x_1)\, y\, S(B(x_2))\, x_3 \big), \quad \forall\, x, y \in H.
\end{equation}
The pair $(H, B)$ is then called a \textbf{Rota-Baxter Hopf algebra of weight $-1$}.
\end{defi}

We next introduce the notion of homomorphisms between Rota-Baxter Hopf algebras of weight $-1$, 
which we shall simply refer to as \emph{homomorphisms of Rota-Baxter Hopf algebras} throughout this paper.

\begin{defi}
Let $(H, B)$ and $(H', B')$ be Rota-Baxter Hopf algebras of weight $-1$. 
A map $f: H \to H'$ is called a \textbf{homomorphism of Rota-Baxter Hopf algebras} if $f$ is a Hopf algebra homomorphism satisfying
\[
f \circ B = B' \circ f.
\]
In this case, we say that $(H, B)$ is \textbf{Rota-Baxter homomorphic} to $(H', B')$.  
If $f$ is bijective, then $f$ is called an \textbf{isomorphism of Rota-Baxter Hopf algebras}, 
and $(H, B)$ is said to be \textbf{Rota-Baxter isomorphic to} $(H', B')$.
\end{defi}

Let $(\frkg, B)$ be a Rota-Baxter Lie algebra of weight $\lambda$. 
A well-known result states that the operator $\widetilde{B} = -\lambda\,\mathrm{id}_{\frkg} - B$ 
is also a Rota-Baxter operator of weight $\lambda$ on $\frkg$. 
As the Hopf algebra analogue, and following the argument of \cite[Proposition~1]{MG}, 
we obtain the following proposition.

\begin{pro}\label{WTB}
Let $(H, B)$ be a Rota-Baxter Hopf algebra of weight $-1$. 
Then the operator $\widetilde{B}: H \to H$ defined by 
\begin{equation}\label{HBWBD}
\widetilde{B}(x) = x_1\, B(S(x_2)), \quad \forall\, x \in H,
\end{equation}
is a Rota-Baxter operator of weight $-1$ on $H$.
\end{pro}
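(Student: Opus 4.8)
The plan is to verify directly that $\widetilde{B}$ is a coalgebra map and that it satisfies the weight $-1$ Rota-Baxter identity \eqref{RBHRS}. For the coalgebra map property, since $H$ is cocommutative, the antipode $S$ is a coalgebra map and $B$ is a coalgebra map by hypothesis; then $\widetilde{B}(x) = x_1\,B(S(x_2))$ is visibly a composite/product of coalgebra maps applied along the comultiplication, so $\Delta(\widetilde{B}(x)) = \widetilde{B}(x_1)\otimes\widetilde{B}(x_2)$ follows from cocommutativity and coassociativity, and $\epsilon(\widetilde{B}(x)) = \epsilon(x_1)\epsilon(B(S(x_2))) = \epsilon(x_1)\epsilon(S(x_2)) = \epsilon(x_1)\epsilon(x_2) = \epsilon(x)$. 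I would record this as a short first step.

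The substantive step is the Rota-Baxter identity. First I would compute $\widetilde{B}(x)\,\widetilde{B}(y) = x_1\,B(S(x_2))\,y_1\,B(S(y_2))$ and try to rewrite the middle product $B(S(x_2))\,y_1\,B(S(y_2))$ using the defining identity for $B$. The trick is that $B(S(x_2))\,y_1 B(S(y_2))$ is itself a product of two $B$-values if we first move $y_1$ inside: using cocommutativity I would write it as $B(S(x_2))\,\big(B((y_1)\text{-part})\cdots\big)$ — more precisely I expect to apply \eqref{RBHRS} in the form $B(u)B(v) = B(B(u_1)\,v\,S(B(u_2))\,u_3)$ with $u = S(x_2)$ (using $\Delta(S(x_2)) = S(x_3)\otimes S(x_2)$ by cocommutativity, reindexed) and $v$ built from $y$ and the remaining $S(y_2)$-factor, the latter handled by first observing $y_1 B(S(y_2))$ relates to $\widetilde B$ of a sub-expression or can be absorbed. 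Then $\widetilde B(x)\widetilde B(y)$ becomes $x_1 \cdot B(\text{something})$, and I would need to recognize that ``something'' equals $\widetilde B(x_1)\cdots$ of the argument predicted by the $\widetilde B$-Rota-Baxter identity, namely $\widetilde B(x_1)\,y\,S(\widetilde B(x_2))\,x_3$. Expanding $\widetilde B$ there via \eqref{HBWBD} and collapsing antipode/counit terms using cocommutativity, the goal is to match the two sides as Sweedler expressions.

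The main obstacle will be the Sweedler bookkeeping: both $B$ and $S$ get applied to various legs of iterated coproducts, and one must repeatedly use cocommutativity, coassociativity, the antipode axioms $x_1 S(x_2) = \epsilon(x)1 = S(x_1)x_2$, and $S^2 = \id$ to reduce the number of legs and line things up. A cleaner route, which I would pursue if the brute-force collapse gets unwieldy, is to exploit the conceptual origin of $\widetilde B$: by \cite{HL} and \cite{YL23} a Rota-Baxter operator of weight $-1$ on $H$ corresponds to a ``Rota-Baxter group-like'' structure / factorization, and the companion operator $\widetilde B = (-\lambda\,\id - B)$-analogue at the Hopf level is exactly $\widetilde B(x) = x_1 B(S(x_2))$; one can then transport the classical statement ``$-\lambda\,\id - B$ is Rota-Baxter'' through that correspondence. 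However, since the paper says ``following the argument of \cite[Proposition~1]{MG}'', I expect the intended proof is the direct computation, so I would present the direct verification: Step 1, $\widetilde B$ is a coalgebra map; Step 2, insert \eqref{HBWBD} into both sides of the putative identity $\widetilde B(x)\widetilde B(y) = \widetilde B(\widetilde B(x_1)\,y\,S(\widetilde B(x_2))\,x_3)$, expand, and use \eqref{RBHRS} once on the left and repeatedly simplify with the Hopf axioms until both sides coincide as elements of $H$.
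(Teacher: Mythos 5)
Your overall strategy --- direct verification, coalgebra-map property first, then the identity \eqref{RBHRS} for $\widetilde B$ --- is exactly what the paper intends (it offers no proof beyond ``following the argument of [MG, Proposition~1]''), and your Step 1 is complete and correct. The problem is Step 2: the entire content of the proposition is the Sweedler computation you defer, and the place where you propose to apply \eqref{RBHRS} does not work. On the left-hand side $\widetilde B(x)\widetilde B(y)=x_1\,B(S(x_2))\,y_1\,B(S(y_2))$ the two $B$-factors are separated by $y_1$, so \eqref{RBHRS} --- which only recombines an \emph{adjacent} product $B(u)B(v)$ --- cannot be applied there, and your fallback (``$y_1B(S(y_2))$ relates to $\widetilde B$ of a sub-expression or can be absorbed'') is an assertion, not a step. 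As written, the proposal states that the two sides will coincide after bookkeeping rather than showing it.

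The computation does close, but the single application of \eqref{RBHRS} belongs inside the right-hand side. Set $w=\widetilde B(x_1)\,y\,S(\widetilde B(x_2))\,x_3$ and expand $\widetilde B(w)=w_1\,B(S(w_2))$. Using $S^2=\id$, the anti-multiplicativity of $S$, cocommutativity to reorder legs, and the antipode axiom to cancel adjacent pairs $S(x_i)x_j$, the argument $S(w_2)$ collapses to $B(S(x_i))\,S(y_2)\,S(B(S(x_j)))\,S(x_k)$, i.e.\ to $S(x_i)\ast_B S(y_2)$; then \eqref{RBHRS} gives $B(S(w_2))=B(S(x_i))\,B(S(y_2))$. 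The leftover factor $x_e\,B(S(x_i))$ in $w_1\,B(S(w_2))$ reassembles into $\widetilde B(x_e)$ by \eqref{HBWBD}, the pair $S(\widetilde B(x_d))\widetilde B(x_e)$ cancels because $\widetilde B$ is a coalgebra map, and what remains is $\widetilde B(x)\,y_1\,B(S(y_2))=\widetilde B(x)\widetilde B(y)$. (On group-like elements this mechanism is transparent: $w=gB(g^{-1})hB(g^{-1})^{-1}$, so $B(w^{-1})=B\bigl(B(g^{-1})h^{-1}B(g^{-1})^{-1}g^{-1}\bigr)=B(g^{-1})B(h^{-1})$ and $wB(w^{-1})=gB(g^{-1})hB(h^{-1})$.) You need to carry out this computation explicitly, with the application of \eqref{RBHRS} located on the $B(S(w_2))$ factor, for the proof to be complete.
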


By the definition of $\widetilde{B}$, one can readily check that 
\begin{equation}\label{HBWBC}
B(x) = x_1\, \widetilde{B}(S(x_2)), \quad \forall\, x \in H.
\end{equation}

By \cite[Theorem~3]{MG},  every Rota-Baxter Hopf algebra $(H, B)$ 
gives rise to another Hopf algebra structure on $H$, 
which is called the \textbf{descendent Hopf algebra} of $(H, B)$. Following a similar argument, we obtain the following proposition.

\begin{pro}\label{AT}
Let $(H, B)$ be a Rota-Baxter Hopf algebra of weight $-1$. 
Then $H_{B} = (H, *_B, 1, \Delta, \epsilon, S)$ is a cocommutative Hopf algebra, 
where $*_B : H \otimes H \to H$ is defined by 
\begin{equation*}
x *_B y = B(x_1)\, y\, S(B(x_2))\, x_3, \quad \forall\, x, y \in H,
\end{equation*}
and $S_B : H \to H$ is defined by 
\begin{equation}\label{S_WB}
S_B(x) = S(B(x_1))\, S(x_2)\, B(x_3), \quad \forall\, x \in H.
\end{equation}
\end{pro}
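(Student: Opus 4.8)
The plan is to verify directly that $H_B$ satisfies the axioms of a cocommutative Hopf algebra, mirroring the argument of \cite[Theorem~3]{MG} for the weight $1$ case. The standing inputs are: $B$ is a coalgebra map, so $\Delta(B(x)) = B(x_1)\otimes B(x_2)$ and $\epsilon(B(x)) = \epsilon(x)$; $S$ is a coalgebra map with $S^2 = \id$, since $H$ is cocommutative; and the defining identity \eqref{RBHRS} says precisely that $B(x *_B y) = B(x)\,B(y)$, i.e. $B$ takes the operation $*_B$ to the multiplication of $H$. I would first record two further elementary facts. Putting $x = y = 1$ in \eqref{RBHRS} gives $B(1)^2 = B\big(B(1)\,S(B(1))\big) = B(1)$, and since $B(1)$ is grouplike it is invertible with inverse $S(B(1))$; hence $B(1) = 1$. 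Applying the antipode identity $\sum z_1 S(z_2) = \epsilon(z)1 = \sum S(z_1) z_2$ to $z = B(x)$ and using that $B$ is a coalgebra map gives
\[
B(x_1)\,S(B(x_2)) \;=\; \epsilon(x)\,1 \;=\; S(B(x_1))\,B(x_2).
\]

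Next I would show that $(H,*_B,1)$ is a unital associative algebra. Unitality is immediate: $1 *_B x = B(1)\,x\,S(B(1))\,1 = x$ from $B(1)=1$ and $S(1)=1$, while $x *_B 1 = B(x_1)\,S(B(x_2))\,x_3 = \epsilon(x_1)\,x_2 = x$ by the relation $B(x_1)S(B(x_2)) = \epsilon(x)1$. For associativity, I would expand
\[
(x *_B y) *_B z \;=\; B\big((x*_B y)_1\big)\,z\,S\big(B((x*_B y)_2)\big)\,(x*_B y)_3,
\]
use the formula $\Delta(x *_B y) = (x_1 *_B y_1)\otimes(x_2 *_B y_2)$ (established in the bialgebra step below) to write $(x*_B y)_i$ in terms of $x_i$ and $y_i$, rewrite $B(x_i *_B y_i) = B(x_i)B(y_i)$, expand $(x*_B y)_3$, and finally collapse the resulting factor $S(B(x_i))\,B(x_j)$ by the relation $S(B(x_1))B(x_2) = \epsilon(x)1$. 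What is left is $B(x_1)B(y_1)\,z\,S(B(y_2))\,y_3\,S(B(x_2))\,x_3 = x *_B (y *_B z)$. This is the Hopf-algebraic shadow of the one-line group computation $(x\circ y)\circ z = B(x)B(y)\,z\,B(y)^{-1}y\,B(x)^{-1}x = x\circ(y\circ z)$ for the descendent group product $x \circ y = B(x)\,y\,B(x)^{-1}x$, and the identity $B(x *_B y) = B(x)B(y)$ is exactly what makes it go through.

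For the bialgebra structure, since $\Delta$ is an algebra map on $H$ one has $\Delta(x *_B y) = \Delta(B(x_1))\Delta(y)\Delta(S(B(x_2)))\Delta(x_3)$; expanding each factor using that $B$ and $S\circ B$ are coalgebra maps, and using cocommutativity to permute the Sweedler legs of $x$, yields $\Delta(x *_B y) = (x_1 *_B y_1)\otimes(x_2 *_B y_2)$. Likewise $\epsilon(x *_B y) = \epsilon(x)\epsilon(y)$, and $\Delta(1) = 1\otimes 1$, $\epsilon(1)=1$ as in $H$, so $H_B$ is a bialgebra, cocommutative because its coalgebra structure is that of $H$. For the antipode, I would check $\sum x_1 *_B S_B(x_2) = \epsilon(x)1$ and $\sum S_B(x_1) *_B x_2 = \epsilon(x)1$ (note $S_B$ is a coalgebra map, being the convolution product $(S\circ B) * S * B$ of coalgebra maps out of the cocommutative coalgebra $H$). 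Expanding $x_1 *_B S_B(x_2)$ from the definitions of $*_B$ and $S_B$ and relabelling legs of $x$ by cocommutativity produces a product $B(x_1)S(B(x_2))\,S(x_3)\,B(x_4)S(B(x_5))\,x_6$; then $B(x_4)S(B(x_5))$ collapses by $B(z_1)S(B(z_2)) = \epsilon(z)1$, the factor $S(x_3)x_4$ collapses by the ordinary antipode identity, and what remains is $\sum B(x_1)S(B(x_2)) = \epsilon(B(x))1 = \epsilon(x)1$. The second identity is verified in the same way.

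The routine part is the Sweedler-index bookkeeping in the associativity and antipode computations; the one point worth stating carefully is that every reordering of Sweedler legs used above is legitimate, because coassociativity allows arbitrary regrouping of $\Delta_n(x)$ and cocommutativity then allows arbitrary permutation of the resulting tensor legs. I do not expect a genuine obstacle: once $B(1) = 1$ and $B(x *_B y) = B(x)B(y)$ are in hand, each verification collapses to the corresponding group-level identity, exactly as in \cite[Theorem~3]{MG}.
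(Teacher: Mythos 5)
Your verification is correct in substance and follows the route the paper intends: the paper offers no proof of this proposition, deferring to \cite[Theorem~3]{MG}, and your argument is exactly the weight $-1$ transcription of that one. The preparatory facts ($B(1)=1$, the convolution identities $B(x_1)S(B(x_2))=\epsilon(x)1=S(B(x_1))B(x_2)$, and the reading of \eqref{RBHRS} as $B(x*_By)=B(x)B(y)$), the unit and associativity checks, the bialgebra compatibility via cocommutative leg-permutation, and the right antipode identity $x_1*_BS_B(x_2)=\epsilon(x)1$ are all sound as written.

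The one assertion that does \emph{not} go through ``in the same way'' is the left antipode identity $S_B(x_1)*_Bx_2=\epsilon(x)1$. Because $*_B$ applies $B$ to its \emph{first} argument, expanding this expression produces $B(S_B(x_1))\,x_4\,S(B(S_B(x_2)))\,S_B(x_3)$, so you must evaluate $B\circ S_B$; since $B$ is not multiplicative, this is not reachable by Sweedler leg-shuffling alone, and a naive mirror of the right-hand computation stalls here. You need the auxiliary identity $B\circ S_B=S\circ B$ first. It follows in two lines from what you already have: by \eqref{RBHRS} and the right identity, $B(x_1)\,B(S_B(x_2))=B(x_1*_BS_B(x_2))=\epsilon(x)1$, so $B\circ S_B$ is a right convolution inverse of $B$ in $\Hom(H,H)$; since $S\circ B$ is a two-sided convolution inverse of $B$ there, the two coincide. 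With $B(S_B(x))=S(B(x))$ and $S^2=\id$ in hand, the left identity collapses exactly as the right one did. Note that the left identity genuinely requires checking: $H_B$ need not be commutative, so a one-sided convolution inverse of $\id_{H_B}$ is not automatically two-sided. (Finally, the ``$S$'' in the displayed tuple of the statement is evidently a typo for $S_B$; you read it correctly.)
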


It follows from Propositions~\ref{WTB} and~\ref{AT} that 
$H_{\widetilde{B}} = (H, *_{\widetilde{B}}, 1, \Delta, \epsilon, S)$ is also a Hopf algebra.

Finally, following the proof of \cite[Proposition~6 and Corollary~3]{MG}, we obtain the following result.

\begin{pro}\label{BH}
Let $(H, B)$ be a Rota-Baxter Hopf algebra of weight $-1$. 
Then $B$ is a homomorphism of Hopf algebras from $H_B$ to $H$. 
Furthermore, $\operatorname{Im} B$ is a Hopf subalgebra of $H$.
\end{pro}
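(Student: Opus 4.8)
The plan is to verify the two assertions in turn, using the explicit formulas for $*_B$ and $S_B$ from Proposition~\ref{AT} together with the defining identity \eqref{RBHRS}. First I would check that $B\colon H_B\to H$ is a Hopf algebra homomorphism. Since $B$ is by hypothesis a coalgebra map, it automatically respects $\Delta$ and $\epsilon$; and because $B$ is a coalgebra map between cocommutative Hopf algebras, compatibility with the antipode is automatic once multiplicativity is established (an anti-homomorphism of algebras commuting with the comultiplication automatically commutes with $S$; in the cocommutative setting the antipode is an algebra map). So the real content is multiplicativity: I must show
\[
B(x *_B y) = B(x)\,B(y), \qquad \forall\, x,y\in H.
\]
But $x *_B y = B(x_1)\,y\,S(B(x_2))\,x_3$, so $B(x *_B y) = B\big(B(x_1)\,y\,S(B(x_2))\,x_3\big)$, which is exactly the right-hand side of \eqref{RBHRS}, hence equals $B(x)\,B(y)$. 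One should also note $B(1) = 1$, which follows from $B$ being a coalgebra map together with the weight $-1$ Rota-Baxter identity applied with suitable arguments (or directly, since a coalgebra map sends the grouplike $1$ to a grouplike element, and cocommutative Hopf algebras over a field of characteristic $0$ have $1$ as the relevant grouplike in $\operatorname{Im}B$ — more carefully, set $x=y=1$ in \eqref{RBHRS} to get $B(1)^2 = B(B(1)S(B(1))) = B(1)$, and $B(1)$ invertible gives $B(1)=1$).

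For the second statement, that $\operatorname{Im}B$ is a Hopf subalgebra of $H$, I would argue as follows. It is a subalgebra: given $B(x), B(y)\in\operatorname{Im}B$, the identity \eqref{RBHRS} expresses $B(x)B(y)$ again as an element of $\operatorname{Im}B$, and $1 = B(1)\in\operatorname{Im}B$ by the previous paragraph. It is a subcoalgebra: since $B$ is a coalgebra map, $\Delta(B(x)) = B(x_1)\otimes B(x_2)\in\operatorname{Im}B\otimes\operatorname{Im}B$, and $\epsilon$ restricts. Finally it is closed under the antipode: I would use the formula for $S_B$ from \eqref{S_WB}, namely $S_B(x) = S(B(x_1))\,S(x_2)\,B(x_3)$; applying $B$ and invoking multiplicativity of $B\colon H_B\to H$ just established, $B(S_B(x)) = S_B^{H}\!\big(B(x)\big)$ where $S_B^H$ denotes the image under the homomorphism — more cleanly, since $B$ is a Hopf algebra homomorphism $H_B\to H$ and Hopf algebra homomorphisms intertwine antipodes, $B(S_B(x)) = S(B(x))$, which shows $S(B(x))\in\operatorname{Im}B$. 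Thus $\operatorname{Im}B$ is closed under all the structure maps and contains the unit, hence is a Hopf subalgebra.

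The main obstacle I anticipate is purely bookkeeping rather than conceptual: one must be careful that the antipode of $H_B$ really is $S_B$ as in \eqref{S_WB} (this is granted by Proposition~\ref{AT}) and that the general fact ``a Hopf algebra homomorphism commutes with the antipode'' is being applied to the correct antipodes on the correct sides, i.e. $B\circ S_B = S\circ B$. Since this general fact holds for any bialgebra homomorphism between Hopf algebras, and $B\colon H_B\to H$ is a bialgebra homomorphism by the first part, the conclusion is immediate; but it is worth stating explicitly to avoid confusion between $S$ (the antipode shared by $H$ and, as a map of sets, by $H_B$ as recorded in Proposition~\ref{AT}) and $S_B$ (the antipode computed with respect to $*_B$). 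No delicate Sweedler manipulation beyond unwinding \eqref{RBHRS} is needed, so the proof is short.
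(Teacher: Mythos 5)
Your proposal is correct and follows essentially the same route the paper intends: the paper gives no proof but defers to \cite[Proposition~6 and Corollary~3]{MG}, whose argument is exactly yours — multiplicativity of $B\colon H_B\to H$ is literally the identity \eqref{RBHRS} read as $B(x*_By)=B(x)B(y)$, the coalgebra and unit conditions are immediate, antipode compatibility follows from the general fact that bialgebra morphisms of Hopf algebras intertwine antipodes, and $\operatorname{Im}B$ is then a Hopf subalgebra as the image of a Hopf algebra homomorphism.
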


Let $(H, B)$ be a Rota-Baxter Hopf algebra of weight $-1.$ 
Next, we show that there exists a Rota-Baxter Hopf algebra structure on $H_B$, 
called the \textbf{descendent Rota-Baxter Hopf algebra} of $(H, B).$

\begin{pro}
Let $(H, B)$ be a Rota-Baxter Hopf algebra of weight $-1.$ 
Then $(H_B, B)$ is also a Rota-Baxter Hopf algebra of weight $-1.$
\end{pro}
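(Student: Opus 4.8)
The plan is to verify the defining identity \eqref{RBHRS} for the operator $B$ acting on the descendent Hopf algebra $H_B = (H, *_B, 1, \Delta, \epsilon, S)$. Since $B$ is unchanged as a linear map and $\Delta$, $\epsilon$ are the same, $B$ is still a coalgebra map on $H_B$; the only thing to check is the Rota-Baxter identity relative to the product $*_B$ and antipode $S$. That is, writing $a \cdot_B b$ for $a *_B b$ and keeping in mind that the antipode of $H_B$ is $S_B$, I must show
\[
B(x) *_B B(y) = B\bigl( B(x_1) *_B y *_B S_B(B(x_2)) *_B x_3 \bigr), \qquad \forall\, x, y \in H,
\]
where the comultiplication on the right is that of $H$ (which is the same as that of $H_B$).

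First I would expand the left-hand side: by Proposition~\ref{BH}, $B\colon H_B \to H$ is a Hopf algebra homomorphism, so in particular $B$ is multiplicative from $*_B$ to the original product, giving $B(x)*_B B(y) = B(x_1)B(y)B(x_2)^{-1}\cdots$ — more precisely $B(x)*_B B(y) = B(B(x)_1)\,B(y)\,S(B(B(x)_2))\,B(x)_3$; using that $B$ is a coalgebra map, $B(x)_i = B(x_i)$, so this equals $B\bigl( B(x_1) \bigr)\, B(y)\, S\bigl(B(B(x_2))\bigr)\, B(x_3)$ after applying the original Rota-Baxter identity \eqref{RBHRS} to collapse $B(B(x_1))\cdot$ — actually the cleanest route is: $B(x)*_B B(y) = B\bigl( B(x)_1 \rhd \cdots\bigr)$; I will instead use that $H_B$-multiplicativity of $B$ reduces the LHS to an expression of the form $B(u)$ for an explicit $u \in H$, then likewise rewrite the argument of the RHS. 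The key auxiliary facts are: $B$ is $H_B$-to-$H$ multiplicative and counital (Proposition~\ref{BH}); $S_B$ is the antipode of $H_B$ (Proposition~\ref{AT}); $S_B(B(x)) = B(S_B(x))$ since $B$ commutes with antipodes as a Hopf map; and cocommutativity, which lets me freely permute Sweedler legs and use $S^2 = \id$. Applying $B$ to the argument of the RHS and using that $B$ is an algebra map $H_B \to H$, the RHS becomes $B(B(x_1))\cdot B(y) \cdot B(S_B(B(x_2)))\cdot B(x_3)$ computed in the original product; then I reduce $B(S_B(B(x_2)))$ using $B \circ S_B = S \circ B$ (antipode compatibility for the Hopf map $B$) and $B(B(x_2))$ handled by one more pass of \eqref{RBHRS}, and match it term by term against the expanded LHS.

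The main obstacle I expect is the bookkeeping: every occurrence of $B$ composed with $B$ must be unwound using the weight $-1$ identity \eqref{RBHRS}, and because that identity itself produces three Sweedler legs and an $S$, the number of tensor factors proliferates quickly. So the crucial step is organizing the computation so that $B$-multiplicativity (Proposition~\ref{BH}) is invoked first to turn both sides into a single application of $B$ to an element of $H$, after which the identity to be proved is simply the statement that $B$ is a Hopf homomorphism $H_B \to H$ applied to the (already known) Rota-Baxter identity for $(H,B)$ rewritten inside $H_B$. Concretely, once both sides are reduced to $B(\text{word in } H_B)$, the desired equality is literally $B$ applied to the tautology "$B(x)\cdot_B B(y) = B(x_1)\cdot_B y \cdot_B S_B(B(x_2))\cdot_B x_3$ holds in $H_B$", which in turn is \eqref{RBHRS} for $(H,B)$ transported along the Hopf isomorphism of underlying coalgebras — so no genuinely new calculation is needed, only careful substitution. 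I would finish by remarking that cocommutativity of $H_B$ (Proposition~\ref{AT}) guarantees the Sweedler manipulations are legitimate, completing the verification.
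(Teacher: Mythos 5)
Your proposal is correct and follows essentially the same route as the paper: use the multiplicativity of $B\colon H_B\to H$ (Proposition~\ref{BH}) to collapse the right-hand side to $B(B(x_1))\,B(y)\,S(B(B(x_2)))\,B(x_3)$ and recognize this as $B(x)*_B B(y)$ by the definition of $*_B$; no further unwinding via \eqref{RBHRS} is actually needed. Your explicit use of $B\circ S_B=S\circ B$ to handle the antipode term is in fact slightly more careful than the paper's computation, which writes $S$ where the antipode of $H_B$ is meant.
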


\begin{proof}
We check that $B$ is a Rota-Baxter operator of weight $-1$ on $H_B.$ 
It suffices to verify that \eqref{RBHRS} holds. 
For any $x, y \in H$, we have
\begin{equation*}
\begin{aligned}
B\big(B(x_1) *_B y *_B S(B(x_2)) *_B x_3\big)
&= B(B(x_1))\, B(y)\, S(B(B(x_2)))\, B(x_3) \quad (\textit{by Proposition~\ref{AT}})\\
&= B(x) *_B B(y) \quad (\textit{by \eqref{RBHRS}}).
\end{aligned}
\end{equation*}
Hence, $(H_B, B)$ is a Rota-Baxter Hopf algebra of weight $-1.$
\end{proof}

By the definition of $\widetilde{B}$, one can readily check that for any $x \in H$,
\begin{equation}\label{BTTB}
\widetilde{B}(x_1)\, S(B(S(x_2))) = x, 
\quad 
B(x_1)\, S(\widetilde{B}(S(x_2))) = x.
\end{equation}

Furthermore, we have the following proposition.

\begin{pro}\label{BBCV}
Let $(H, B)$ be a Rota-Baxter Hopf algebra of weight $-1.$ Then the following identities hold:
\begin{equation*}
\begin{aligned}
\widetilde{B}(B(x)) &= B\!\left(S\!\left(\widetilde{B}(S(x))\right)\right),\\
B(\widetilde{B}(x)) &= \widetilde{B}\!\left(S\!\left(B(S(x))\right)\right),
\end{aligned}
\end{equation*}
for all $x \in H.$
\end{pro}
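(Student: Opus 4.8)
The plan is to derive both identities directly from the two conversion formulas relating $B$ and $\widetilde{B}$, namely \eqref{HBWBD}, \eqref{HBWBC}, and the pair of identities in \eqref{BTTB}. These give a kind of ``inverse up to antipode'' relationship between $B$ and $\widetilde{B}$, and the claimed identities are what one gets by composing these relations in the two possible orders; so the proof should be a purely formal Sweedler-notation computation with no appeal to the Rota-Baxter axiom \eqref{RBHRS} itself.

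Concretely, for the first identity I would start from \eqref{HBWBD} applied to the element $B(x)$: since $B$ is a coalgebra map, $\Delta(B(x)) = B(x_1)\otimes B(x_2)$, so
\[
\widetilde{B}(B(x)) = B(x_1)\, B\big(S(B(x_2))\big).
\]
Now I want to recognise the right-hand side as $B$ of something. Apply \eqref{HBWBC} in the form $B(z) = z_1\,\widetilde{B}(S(z_2))$ — or rather use the first identity of \eqref{BTTB}, $\widetilde{B}(z_1)\,S(B(S(z_2))) = z$, with $z = S(x)$ and cocommutativity (so that $S$ is a coalgebra map and $S^2 = \id$): this reads $\widetilde{B}(S(x_1))\,S(B(x_2)) = S(x)$, i.e.\ $S(B(x_2))\,\widetilde{B}(S(x_1)) = S(x)$ after using cocommutativity to commute factors under the coproduct. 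The strategy is then to insert the expression for $S(B(x_2))$ obtained by applying $B$ to a suitable argument, or more cleanly, to apply the Hopf-algebra homomorphism property of $B$ from $H_B$ to $H$ (Proposition~\ref{BH}) together with the explicit antipode $S_B$ of $H_B$ from \eqref{S_WB}: since $B\circ S_B = S\circ B$, we have $S(B(x)) = B(S_B(x)) = B\big(S(B(x_1))\,S(x_2)\,B(x_3)\big)$. Matching this against $B(x_1)\,B(S(B(x_2)))$ and against $B(S(\widetilde{B}(S(x))))$ — again expanding $\widetilde{B}(S(x))$ by \eqref{HBWBD} — should reduce both sides to the same Sweedler expression inside a single application of $B$, using only coassociativity, cocommutativity, and the antipode axioms. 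The second identity follows by the same argument with the roles of $B$ and $\widetilde{B}$ exchanged, which is legitimate because $\widetilde{B}$ is itself a Rota-Baxter operator of weight $-1$ by Proposition~\ref{WTB} and, by \eqref{HBWBC}, $B$ is to $\widetilde{B}$ exactly as $\widetilde{B}$ is to $B$.

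The main obstacle I anticipate is purely bookkeeping: getting the Sweedler indices and the placement of antipodes to line up so that both sides collapse to a common expression of the form $B(\text{word in }x_1,\dots,x_n)$. In particular one must be careful that the ``inverse'' relations \eqref{BTTB} are used in the correct one of their two forms, and that each use of cocommutativity to reorder tensor factors is applied before, not after, a non-coalgebra map like multiplication is applied. A clean way to organise this is to first prove the auxiliary identity $S(B(x)) = B\big(S(B(x_1))\,x_2\big)$ (equivalently read off from \eqref{S_WB} after cancelling $S(x_2)\,B(x_3)$ against part of the word using the antipode axiom), then substitute; but I would only introduce such a lemma if the direct expansion proves too unwieldy. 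No genuinely hard step is expected — the content is entirely in Propositions~\ref{WTB}, \ref{AT}, \ref{BH} and the elementary relations \eqref{HBWBD}–\eqref{BTTB}.
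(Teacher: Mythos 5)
There is a genuine gap: your headline claim that the proof is ``a purely formal Sweedler-notation computation with no appeal to the Rota-Baxter axiom \eqref{RBHRS} itself'' is false, and the place where the axiom is indispensable is exactly the step your sketch glosses over. Unwinding the definitions, the identity to be proved reduces (using only \eqref{HBWBD}, cocommutativity and $S^2=\id$) to
\begin{equation*}
B(x_1)\,B\big(S(B(x_2))\big) \;=\; B\big(S(B(x_1))\,x_2\big),
\end{equation*}
since $S(\widetilde{B}(S(x)))=S(B(x_1))\,x_2$ is indeed a formal consequence of \eqref{HBWBD}. But the left-hand side is a \emph{product of two values of $B$}, and the only tool available for rewriting such a product as $B$ of a single element is \eqref{RBHRS} (equivalently, the multiplicativity of $B\colon H_B\to H$ from Proposition~\ref{BH}, which is itself a consequence of \eqref{RBHRS}). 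This is precisely what the paper does: it applies \eqref{RBHRS} with $y=S(B(x_2))$ to get $B\big(B(x_1)S(B(x_4))S(B(x_2))x_3\big)$ and then collapses $B(x_1)S(B(x_2))$ to $\epsilon(x_1)1$. That the axiom cannot be avoided is clear from a counterexample: for an arbitrary coalgebra map $B$ the relations \eqref{HBWBC} and \eqref{BTTB} still hold (they are formal consequences of the definition of $\widetilde{B}$ via convolution inverses), yet the proposition fails — e.g.\ on $\mathbb{K}[\mathbb{Z}]$ with $B$ the linear extension of $n\mapsto n^2$.

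A second concrete error: the auxiliary identity you propose, $S(B(x))=B\big(S(B(x_1))\,x_2\big)$, is false. Taking $B=\id$ (which \emph{is} a Rota-Baxter operator of weight $-1$) gives $S(x)$ on the left and $\epsilon(x)1$ on the right. The purported derivation — ``cancelling $S(x_2)\,B(x_3)$ against part of the word using the antipode axiom'' inside $B\big(S(B(x_1))S(x_2)B(x_3)\big)$ — is not a valid antipode cancellation. What is actually true is $\widetilde{B}(B(x))=B\big(S(B(x_1))\,x_2\big)$, i.e.\ the very statement being proved; so introducing your lemma would make the argument circular even if it were correct. To repair the proof, start from $\widetilde{B}(B(x))=B(x_1)B(S(B(x_2)))$ and apply \eqref{RBHRS} directly, as the paper does; the second identity then follows by the symmetry $B\leftrightarrow\widetilde{B}$ via Proposition~\ref{WTB} and \eqref{HBWBC}, as you correctly note.
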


\begin{proof}
For any $x \in H$, we compute
\begin{equation*}
\begin{aligned}
\widetilde{B}(B(x))
&= B(x_1)\, B(S(B(x_2)))\\
&= B\big(B(x_1)\, S(B(x_4))\, S(B(x_2))\, x_3\big) 
\quad \textit{(by \eqref{RBHRS})}\\
&= B\big(B(x_1)\, S(B(x_2))\, S(B(x_3))\, x_4\big)\\
&= B\big(\epsilon(x_1)\, S(B(x_2))\, x_3\big).
\end{aligned}
\end{equation*}
It follows that 
\begin{equation*}
\begin{aligned}
\widetilde{B}(B(x))
= B(S(B(x_1))\, x_2)
= B(S(\widetilde{B}(S(x)))) \quad \textit{(by \eqref{HBWBD})}.
\end{aligned}
\end{equation*}
Since $\widetilde{B}$ is also a Rota-Baxter operator of weight $-1$ on $H$, 
the second identity follows in the same way.
\end{proof}

Next, we investigate the relationship between the products $\ast_B$ and $\ast_{\wtd{B}}$.

\begin{pro}\label{ATWB}
Let $(H,B)$ be a Rota-Baxter Hopf algebra of weight $-1$. Then the following equality holds:
$$
S(S(x)\ast_{\wtd{B}} S(y))=x\ast_{B} y,\quad \forall\, x,y\in H.
$$
\end{pro}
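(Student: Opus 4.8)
The plan is to unfold both sides using the explicit formulas for $\ast_B$, $\ast_{\wtd B}$ (Proposition~\ref{AT} applied to $B$ and to $\wtd B$) and the relations between $B$ and $\wtd B$ from \eqref{HBWBD}, \eqref{HBWBC}, \eqref{BTTB}, together with cocommutativity of $H$ and the fact that $S$ is an antihomomorphism with $S^2=\id$. Writing out the right-hand side, $x\ast_B y = B(x_1)\,y\,S(B(x_2))\,x_3$. For the left-hand side, first compute $S(x)\ast_{\wtd B} S(y) = \wtd B(S(x)_1)\,S(y)\,S(\wtd B(S(x)_2))\,S(x)_3$; using cocommutativity this is $\wtd B(S(x_1))\,S(y)\,S(\wtd B(S(x_2)))\,S(x_3)$ (after relabeling, since $\Delta S = (S\otimes S)\tau\Delta = (S\otimes S)\Delta$ by cocommutativity). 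Applying $S$ to this product reverses the order of the four factors and turns each into its $S$-image:
\[
S\big(S(x)\ast_{\wtd B} S(y)\big) = S(S(x_3))\,\wtd B(S(x_2))\,S(S(y))\,S(\wtd B(S(x_1))) = x_3\,\wtd B(S(x_2))\,y\,S(\wtd B(S(x_1))),
\]
where I have used $S^2=\id$ and need to be careful with the Sweedler indexing after the reversal (the innermost leg $x_1$ that was hit by $\wtd B$ in the last slot becomes the outermost, etc.).

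The key step is then to recognize the two expressions $B(x_1)\,y\,S(B(x_2))\,x_3$ and $x_3\,\wtd B(S(x_2))\,y\,S(\wtd B(S(x_1)))$ as equal. For this I would invoke \eqref{HBWBC}, $B(x)=x_1\,\wtd B(S(x_2))$, to rewrite the leading factor $B(x_1)$ on the right-hand side of Proposition~\ref{AT} in terms of $\wtd B$, and dually use \eqref{BTTB}, namely $B(x_1)\,S(\wtd B(S(x_2))) = x$, to collapse the trailing factor. Concretely, after expanding $B(x_1)=x_1\,\wtd B(S(x_2))$ (so the total degree of comultiplication rises by one and the indices shift), one gets $x_1\,\wtd B(S(x_2))\,y\,S(B(x_3))\,x_4$; the remaining task is to show $\wtd B(S(x_2))\,y\,S(B(x_3))\,x_4$ matches $\wtd B(S(x_2))\,y\,S(\wtd B(S(x_1)))\cdot(\text{something})$ after the index bookkeeping, which reduces precisely to an instance of the identities in \eqref{BTTB} read as $S(B(x_1))\,x_2$-type contractions. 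Cocommutativity lets me freely permute the Sweedler legs to align the two sides.

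The main obstacle, as usual in these Hopf-algebraic manipulations, is bookkeeping of the Sweedler indices: applying $S$ to a product of four factors, each itself involving a comultiplication inside $B$ or $\wtd B$, and then re-expanding via \eqref{HBWBC} changes the number of tensor legs, so one must track carefully which leg carries which operator. I expect no conceptual difficulty — the only inputs are (i) $S$ is an antihomomorphism, (ii) $S^2=\id$ and $\Delta S=(S\otimes S)\Delta$ by cocommutativity, (iii) the formulas \eqref{HBWBD}, \eqref{HBWBC}, \eqref{BTTB}, and (iv) the definition of $\ast_B$ from Proposition~\ref{AT} — and the proof is a finite, if slightly delicate, chain of rewrites. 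One clean way to organize it is to verify the equivalent statement $S(x)\ast_{\wtd B}S(y) = S(x\ast_B y)$ directly: compute $S(x\ast_B y)=S(x_3)\,B(x_2)\,S(y)\,S(B(x_1))$, then substitute $B(x_i)$ via \eqref{HBWBC} and recognize the result as $\wtd B(S(x_1))\,S(y)\,S(\wtd B(S(x_2)))\,S(x_3)$, which is exactly $S(x)\ast_{\wtd B}S(y)$ after using $\Delta S=(S\otimes S)\Delta$ and $S^2=\id$; this avoids expanding $S$ of a four-fold product twice.
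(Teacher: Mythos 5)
Your proposal is correct and follows essentially the same route as the paper's proof: expand $S(x)\ast_{\wtd{B}}S(y)$ via the definition of $\ast_{\wtd{B}}$, use that $S$ is an antihomomorphism with $S^2=\id$ together with cocommutativity to obtain $x_1\,\wtd{B}(S(x_2))\,y\,S(\wtd{B}(S(x_3)))$, and then contract using \eqref{HBWBD}, \eqref{HBWBC}, and the \eqref{BTTB}-type identities to recover $B(x_1)\,y\,S(B(x_2))\,x_3=x\ast_B y$. The only difference is cosmetic (you run the rewriting partly from the $\ast_B$ side and also note the equivalent formulation $S(x)\ast_{\wtd{B}}S(y)=S(x\ast_B y)$), so no further comment is needed.
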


\begin{proof}
For any $x,y\in H$, we have
\begin{equation*}
\begin{aligned}
S(S(x)\ast_{\wtd{B}}S(y))
&=S\!\left(
\wtd{B}(S(x_3))S(y)S(\wtd{B}(S(x_2)))S(x_1)
\right)\\
&=x_1\, \wtd{B}(S(x_2))\, y\, S(\wtd{B}(S(x_3)))\\
&=x_1 S(x_2) B(x_2) y S(B(x_3)) x_4
\quad (\text{by \eqref{HBWBD} and \eqref{HBWBC}})\\
&=\epsilon(x_1) B(x_2) y S(B(x_3)) x_4\\
&=B(x_1) y S(B(x_2)) x_3
=x\ast_{B} y.
\end{aligned}
\end{equation*}
\end{proof}

\subsection{The matched pair of Hopf algebras induced by Rota-Baxter Hopf algebras}

Let $(H,B)$ be a Rota-Baxter Hopf algebra of weight $-1$. Throughout this paper, denote
$$
\begin{aligned}
H_+&=\im B, \quad H_-=\im \wtd{B},\\
K_+&=\ker \wtd{B}, \quad K_-=\ker B.
\end{aligned}
$$
By Proposition~\ref{BH}, both $H_+$ and $H_-$ are Hopf subalgebras of $H$.

In the next theorem, we prove that every Rota-Baxter Hopf algebra of weight $-1$ induces a matched pair of Hopf algebras.
\begin{thm}
Let $(H,B)$ be a Rota-Baxter Hopf algebra of weight $-1$. 
Define the linear maps $\rhd:H_+\otimes H_-\to H_-$ by
\[
B(x)\rhd \wtd{B}(y)=\wtd{B}(B(x_1)yS(B(x_2))),\quad \forall\, x,y\in H,
\]
and $\ahd:H_+\otimes H_-\to H_+$ by
\[
B(x)\ahd \wtd{B}(y)
= S\!\left(B\!\left(S(\wtd{B}(y_1))\,S_B(x)\,\wtd{B}(y_2)\right)\right),\quad \forall\, x,y\in H.
\]
Then $(H_+,H_-,\rhd,\ahd)$ is a matched pair of Hopf algebras.
\end{thm}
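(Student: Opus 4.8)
The plan is to verify the four matched-pair axioms \eqref{M1}--\eqref{M4} directly, after first checking the two structural prerequisites: that $\rhd$ makes $H_-$ a left $H_+$-module coalgebra and that $\ahd$ makes $H_+$ a right $H_-$-module coalgebra. The key technical reduction is that every element of $H_+$ is of the form $B(x)$ and every element of $H_-$ is of the form $\wtd{B}(y)$, so all identities can be pulled back to computations inside $H$ involving $B$, $\wtd{B}$, $S$, and $S_B$; the well-definedness of $\rhd$ and $\ahd$ must be addressed, but this follows because the defining formulas are expressed through the descendent product $\ast_B$ (indeed $B(x)\rhd\wtd{B}(y)=\wtd{B}(B(x_1)\,y\,S(B(x_2)))$ only depends on $B(x)$ via Proposition~\ref{AT}-type manipulations, and similarly for $\ahd$, which is built from $S_B$). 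I would dispatch well-definedness first, noting that $B(x_1)\,y\,S(B(x_2))$ rewritten appropriately depends only on $B(x)$ and that $\wtd{B}$ applied to it lands in $H_-$.

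First I would establish that $\rhd$ is a left action: compatibility $B(x)\rhd(B(x')\rhd\wtd{B}(y))=(B(x)B(x'))\rhd\wtd{B}(y)$ reduces, via the Rota-Baxter identity \eqref{RBHRS} applied to $B(x)B(x')=B(B(x_1)\,x'\,S(B(x_2))\,x_3)$, to a Sweedler-notation manipulation using cocommutativity; the unit condition $1_{H_+}\rhd\wtd{B}(y)=\wtd{B}(y)$ follows since $1_{H_+}=B(1)$ and $B(1)=1$ forces the expression to collapse by the counit axiom. That $\wtd{B}(y)\mapsto B(x)\rhd\wtd{B}(y)$ is a coalgebra map uses that $B$ and $\wtd{B}$ are coalgebra maps together with cocommutativity. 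The analogous statements for $\ahd$ are the mirror computation; here I would exploit Proposition~\ref{ATWB}, which relates $\ast_B$ and $\ast_{\wtd B}$ via $S$, and Proposition~\ref{BBCV}, to convert the $S\!\left(B(\cdots)\right)$ expression into something manifestly in $H_+$ and to verify associativity of the action.

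Next come the four axioms. Equations \eqref{M1} and \eqref{M2} are the normalisation conditions: $B(x)\rhd 1_{H_-}$ — taking $y=1$ so $\wtd{B}(1)=1$ — gives $\wtd{B}(B(x_1)\,S(B(x_2)))=\wtd{B}(\epsilon(x)1)=\epsilon(x)1$ by the antipode axiom, and this equals $\epsilon_{H_+}(B(x))\,1_{H_-}$ since $\epsilon\circ B=\epsilon$; symmetrically for \eqref{M2} using $S_B(1)=1$. For \eqref{M3}, $B(x)\rhd(\wtd{B}(y)\wtd{B}(y'))$: I would first express the product $\wtd{B}(y)\wtd{B}(y')$ using that $\wtd B$ is Rota-Baxter of weight $-1$ (Proposition~\ref{WTB}), namely $\wtd{B}(y)\wtd{B}(y')=\wtd{B}(\wtd{B}(y_1)\,y'\,S(\wtd{B}(y_2))\,y_3)$, then apply the definition of $\rhd$, and on the other side expand $(B(x_1)\rhd\wtd{B}(y_1))((B(x_2)\ahd\wtd{B}(y_2))\rhd\wtd{B}(y'))$ — the cross-term $B(x_2)\ahd\wtd{B}(y_2)$ is where the definition of $\ahd$ (and hence $S_B$, \eqref{S_WB}) enters, and the match should come out after repeated use of \eqref{RBHRS}, \eqref{BTTB}, and \eqref{HBWBC}. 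Equation \eqref{M4} is the mirror identity and I would handle it by applying $S$ and invoking Proposition~\ref{ATWB} to transport it to a statement already proved, or else repeat the computation with the roles of $B$ and $\wtd B$ interchanged.

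\textbf{Main obstacle.} The hard part will be verifying \eqref{M3} and \eqref{M4}, specifically controlling the cross-action term $B(x_2)\ahd\wtd{B}(y_2)$ inside a larger expression: its definition involves $S_B$, which is itself a triple-Sweedler expression $S(B(x_1))S(x_2)B(x_3)$, so after substitution one faces a five- or six-fold coproduct with nested $B$, $\wtd B$, $S$, and $S_B$, and the bookkeeping to collapse it using \eqref{RBHRS} and the identities \eqref{BTTB}, \eqref{BTTB}, \eqref{HBWBC} is delicate. I expect the cleanest route is to prove \eqref{M4} first in the form obtained by applying $S$ to both sides and using $S^2=\id$ together with Proposition~\ref{ATWB}, reducing it to the compatibility of $\ast_B$ with the coalgebra structure (Proposition~\ref{AT}), and then to derive \eqref{M3} either dually or by a parallel but independent computation; showing that the two module-coalgebra conditions and \eqref{M1}--\eqref{M4} are mutually consistent (they encode precisely that $H_-\bowtie H_+$ is the associative algebra $H$ with product $\ast$ transported appropriately) is the conceptual check that should guide the index-chasing.
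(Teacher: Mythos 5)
Your plan follows essentially the same route as the paper's proof: well-definedness via the kernels of $B$ and $\wtd{B}$, the module-coalgebra checks using that $B,\wtd{B}$ are coalgebra maps, the normalisation axioms \eqref{M1}--\eqref{M2} by direct evaluation, and \eqref{M3} by rewriting $\wtd{B}(y)\wtd{B}(z)=\wtd{B}(y\ast_{\wtd B}z)$ and matching it against the expanded right-hand side. The one step you leave to "come out after repeated use" of the identities is precisely what the paper isolates as its key intermediate lemma — the closed forms \eqref{MM1}--\eqref{MM2} for the two actions and the resulting factorization $(B(x_1)\rhd \wtd{B}(y_1))(B(x_2)\ahd \wtd{B}(y_2))=B(x)\wtd{B}(y)$ — which is the device that collapses the Sweedler bookkeeping in \eqref{M3} and \eqref{M4}, and which your closing remark about $H_-\bowtie H_+$ recovering the product of $H$ already anticipates conceptually.
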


\begin{proof}
We first check that $\rhd$ is well defined. 
For any $x\in H$ and $y\in K_+$, since $K_+\subseteq H_+$ is an ideal, we have 
$B(x_1)yS(B(x_2))\in K_+$. 
It follows that $\wtd{B}(B(x_1)yS(B(x_2)))=0$. 
Hence $\rhd$ is well defined. The same argument applies to $\ahd$.

Next, we verify that $H_-$ is a left $H_+$-module coalgebra with respect to $\rhd$. 
By the  definition of $B$, $H_-$ is an $H_+$-module under $\rhd$. 
Moreover, for any $x,y\in H$, we have
\[
\Delta(B(x)\rhd \wtd{B}(y))
=\wtd{B}(B(x_1)y_1S(B(x_2)))\otimes \wtd{B}(B(x_3)y_2S(B(x_4)))
=\Delta(\wtd{B}(B(x_1)yS(B(x_2)))).
\]
Hence $H_-$ is a left $H_+$-module coalgebra. 
Similarly, $H_+$ is a right $H_-$-module coalgebra with respect to $\ahd$.

It is straightforward to verify that conditions \eqref{M1} and \eqref{M2} hold. 
We now prove \eqref{M3} and \eqref{M4}. 
For any $x,y\in H$, we compute
\[
\begin{aligned}
B(x)\rhd \wtd{B}(y)
&=\wtd{B}(B(x_1)yS(B(x_2)))\\
&=B(x_1)y_1B(S_B(x_2))B(S(B(x_3)y_2S(B(x_4))))\quad (\text{by \eqref{HBWBD} and Proposition~\ref{BH}})\\
&=B(x_1)y_1B\!\left(S(y_2)S_B(x_2)\right).
\end{aligned}
\]
Thus
\begin{equation}\label{MM1}
B(x)\rhd \wtd{B}(y)=B(x_1)y_1B\!\left(S(y_2)S_B(x_2)\right).
\end{equation}
Similarly, we obtain
\begin{equation}\label{MM2}
B(x)\ahd \wtd{B}(y)
=S\!\left(\wtd{B}(y_1)S_B(x_1)\wtd{B}\!\left(S_B(x_2)y_2\right)\right).
\end{equation}
From \eqref{BTTB}, \eqref{MM1}, and \eqref{MM2}, it follows that
\begin{equation}\label{MM3}
(B(x_1)\rhd \wtd{B}(y_1))(B(x_2)\ahd \wtd{B}(y_2))=B(x)\wtd{B}(y).
\end{equation}

Finally, for all $x,y,z\in H$, a direct computation using \eqref{RBHRS} and \eqref{MM3} shows that
\[
(B(x_1)\rhd \wtd{B}(y_1))\!\left((B(x_2)\ahd \wtd{B}(y_2))\rhd \wtd{B}(z)\right)
= B(x)\rhd \wtd{B}(y\ast_{\wtd{B}} z)
= B(x)\rhd (\wtd{B}(y)\wtd{B}(z)),
\]
which proves \eqref{M3}. 
The proof of \eqref{M4} is similar.
\end{proof}

The matched pair of Hopf algebras $(H_+,H_-,\rhd,\ahd)$ given in the above theorem is called \textbf{the matched pair of Hopf algebras on $(H,B)$}.

The next Proposition will be used in the proof of Theorem \ref{MT}.
\begin{pro}\label{MRBE}
Let $(H,B)$ be a Rota-Baxter Hopf algebra of weight $-1$ and $(H_+,H_-,\rhd,\ahd)$ be the matched pair of Hopf algebras on $(H,B)$. Then the following equality holds:
\begin{equation*}
\left(\wtd{B}(x_1),S\circ B\circ S(x_2)  \right)\left( \wtd{B}(y_1), S\circ B\circ S(y_2) \right)=\left( \wtd{B}(x_1y_1),S\circ B\circ S(x_2y_2) \right).
\end{equation*}
on $H_-\bowtie H_+$
\end{pro}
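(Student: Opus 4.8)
The plan is to read the asserted equality as the statement that the linear map $\Phi\colon H\to H_-\bowtie H_+$ given by $\Phi(x)=\bigl(\wtd B(x_1),\,S(B(S(x_2)))\bigr)$ is multiplicative: since $\Delta$ is an algebra map we have $x_1y_1\otimes x_2y_2=\Delta(xy)$, so the right-hand side is exactly $\Phi(xy)$, while the left-hand side is $\Phi(x)\Phi(y)$ computed with the double cross product multiplication. First note $\Phi$ is well defined: $\wtd B(H)=H_-$, and $S(B(S(x)))\in S(\im B)\subseteq H_+$ because $H_+$ is a Hopf subalgebra (Proposition~\ref{BH}). Write $T:=S\circ B\circ S$. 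Since $B$ is a Hopf algebra homomorphism from $H_B$ to $H$ (Proposition~\ref{BH}), it intertwines antipodes, i.e. $S\circ B=B\circ S_B$; hence $T=B\circ(S_B\circ S)$, so each value $T(x_{(i)})$ lies in $\im B$ and may legitimately be inserted as the first argument of $\rhd$ and $\ahd$. Along the way I will also use $S\circ T=B\circ S$ (from $S^2=\id$), $S_B^2=\id$ (cocommutativity of $H_B$, Proposition~\ref{AT}), the Rota-Baxter relation \eqref{RBHRS} for $\wtd B$ (Proposition~\ref{WTB}), the identity $B(u)\,B(v)=B(u\ast_B v)$ (Proposition~\ref{BH}), and the two forms of \eqref{BTTB}: namely $\wtd B(u_1)\,T(u_2)=u$ and, specialising the second relation to $u=S(v)$, $B(S(v_1))\,S(\wtd B(v_2))=S(v)$.

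Expanding the double cross product and distributing the coproducts (legitimate because $\wtd B$ and $T$ are coalgebra maps) gives
\[
\Phi(x)\Phi(y)=\Bigl(\wtd B(x_1)\,\bigl(T(x_2)\rhd\wtd B(y_1)\bigr),\ \bigl(T(x_3)\ahd\wtd B(y_2)\bigr)\,T(y_3)\Bigr),
\]
and I claim the first slot collapses to $\wtd B(x_1y_1)$ and the second to $T(x_2y_2)$. For the first slot: writing $T(x_2)=B(S_B(S(x_2)))$, the definition of $\rhd$ together with $S\circ T=B\circ S$ yields $T(x_2)\rhd\wtd B(y_1)=\wtd B\bigl(T(x_2)\,y_1\,B(S(x_3))\bigr)$; multiplying on the left by $\wtd B(x_1)$ and applying \eqref{RBHRS} for $\wtd B$ turns the product of the two $\wtd B$'s into a single $\wtd B$ of a word in the legs of $x$ and in $y_1$. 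After reordering the $x$-legs by cocommutativity, that word contains in succession the adjacent pairs $\wtd B(x_{(i)})\,T(x_{(i+1)})$ (which equals a single leg by \eqref{BTTB}), then $B(S(x_{(j)}))\,S(\wtd B(x_{(j+1)}))$ (which equals $S$ of a single leg by \eqref{BTTB}), and finally $S(x_{(k)})\,x_{(k+1)}$, annihilated by the counit; what remains is $\wtd B(x_1y_1)$.

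For the second slot: writing $T(x_3)=B(S_B(S(x_3)))$, the definition of $\ahd$ together with $S_B(S_B(S(x_3)))=S(x_3)$ (from $S_B^2=\id$) gives $T(x_3)\ahd\wtd B(y_2)=S\bigl(B\bigl(S(\wtd B(y_2))\,S(x_3)\,\wtd B(y_3)\bigr)\bigr)$. Multiplying on the right by $T(y_4)=S(B(S(y_4)))$ and using that $S$ reverses products brings everything under a single $S$; by $B(u)B(v)=B(u\ast_B v)$ (Proposition~\ref{BH}), $B(S(y_4))\,B(w)=B\bigl(S(y_4)\ast_B w\bigr)$, and expanding $\ast_B$ produces a $B$ of a word in the legs of $y$ and in $S(x_2)$. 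Reordering the $y$-legs by cocommutativity, that word contains in succession the collapsible pairs $B(S(y_{(i)}))\,S(\wtd B(y_{(i+1)}))$ (giving $S$ of a single leg by \eqref{BTTB}), $\wtd B(y_{(j)})\,T(y_{(j+1)})$ (giving a single leg by \eqref{BTTB}), and $y_{(k)}\,S(y_{(k+1)})$, annihilated by the counit; what survives is $S\bigl(B(S(y_2)\,S(x_2))\bigr)=S\bigl(B(S(x_2y_2))\bigr)=T(x_2y_2)$.

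Assembling the two slots yields $\Phi(x)\Phi(y)=\bigl(\wtd B(x_1y_1),\,T(x_2y_2)\bigr)=\Phi(xy)$, which is the assertion. No identity beyond those recalled in Section~\ref{2} is needed; the one genuine difficulty is clerical rather than conceptual — each application of $\rhd$, $\ahd$, \eqref{RBHRS}, or of the expansion of $\ast_B$ splits a Sweedler leg, while each use of \eqref{BTTB} or of a counit axiom merges two, so one must keep scrupulous track of how many tensor copies of $x$ and of $y$ are in play, and which tensor component occupies which slot, before each collapse can be invoked.
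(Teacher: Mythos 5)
Your proposal is correct and follows essentially the same route as the paper: expand the double cross product, rewrite the actions $\rhd$ and $\ahd$ via the formulas for $T=S\circ B\circ S$ in terms of $B$ and $S_B$, merge the two $\wtd B$'s (resp.\ the two $B$'s) with the weight $-1$ Rota--Baxter identity, then reorder Sweedler legs by cocommutativity and collapse adjacent pairs using \eqref{BTTB} and the counit axioms. Your phrasing of the second-slot merge via $B(u)B(v)=B(u\ast_B v)$ is just \eqref{RBHRS} in the notation of Proposition~\ref{BH}, and your explicit justification that $T(x)\in\im B$ (via $S\circ B=B\circ S_B$ and $S_B^2=\id$) is the same fact the paper invokes by citing Proposition~\ref{AT}.
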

\begin{proof}
For any $x,y\in H,$ we have
\begin{equation*}
\begin{aligned}
&\left(\wtd{B}(x_1),S\circ B\circ S(x_2)  \right)\left(\wtd{B}(y_1),S\circ B\circ S(y_2) \right)\\
=&\left(\wtd{B}(x_1)\left(S\circ B\circ S(x_2)\rhd \wtd{B}(y_1)\right),\left(S\circ B\circ S(x_3) \ahd \wtd{B}(y_2)\right)S\circ B\circ S(y_3)\right)\\
=&\left(\wtd{B}(x_1)\wtd{B}\left(S\circ B\circ S(x_2)y_1B\circ S(x_3)  \right),  S\circ B\left(S\circ \wtd{B}(y_2) S(x_4)\wtd{B}(y_3) \right) S\circ B\circ S(y_4)\right)\ (\textit{by Proposition \ref{AT}})\\
=&\left(\wtd{B}\left(\wtd{B}(x_1)S\circ B\circ S(x_4)y_1B\circ S(x_5)S\circ \wtd{B}(x_2)x_3\right),
S\circ(B\left(B\circ S(y_4)S\circ\wtd{B}(y_2)S(x_6)\wtd{B}(y_3)S\circ B\circ S(y_5)S(y_6)        \right)
\right)\\
&(\textit{by \eqref{RBHRS}}).
\end{aligned}
\end{equation*}
It follows that 

\begin{equation*}
\begin{aligned}
&\left(\wtd{B}(x_1),S\circ B\circ S(x_2)  \right)\left(\wtd{B}(y_1),S\circ B\circ S(y_2) \right)\\
=&\left(\wtd{B}\left(\wtd{B}(x_1)S\circ B\circ S(x_2)y_1B\circ S(x_3)S\circ \wtd{B}(x_4)x_5\right),
S\circ B\left(B\circ S(y_2)S\circ\wtd{B}(y_3)S(x_6)\wtd{B}(y_4)S\circ B\circ S(y_5)S(y_6)\right)
\right)\\
=&\left(\wtd{B}\left(x_1y_1 S(x_2)x_3\right),
S\circ B\left(S(y_2)S(x_3)y_4S(y_5)\right)\right)\  (\textit{by \eqref{BTTB}})\\
=&\left(\wtd{B}\left(x_1y_1 \epsilon(x_2)\right),
S\circ B\left(S(y_2)S(x_3)\epsilon(y_3)\right)\right)\\
=&\left(\wtd{B}\left(x_1y_1 \right),
S\circ B\left(S(x_2y_2)\right)\right).
\end{aligned}
\end{equation*}

\end{proof}

\section{Rota-Baxter Hopf algebras and projection homomorphism pairs }\label{3}
In this section, we first introduce the notion of projection homomorphism pairs on matched pairs of Hopf algebras. Then we show that such pairs naturally give rise to Rota-Baxter Hopf algebra structures. Moreover, we prove that the matched pair of Hopf algebras on a Rota-Baxter Hopf algebra induces a projection homomorphism pair $(C,\widetilde{C})$. Finally, we investigate the Rota-Baxter Hopf algebra structures on $\operatorname{Im} C$ and $\operatorname{Im} \widetilde{C}$.                                                                                                                   

\subsection{Projection homomorphism pairs on matched pairs of Hopf algebras}

We first introduce the notion of projection homomorphism pairs on matched pairs of Hopf algebras.
\begin{defi}
Let $(H,H',\rhd,\ahd)$ be a matched pair of Hopf algebras. Let $B:H'\bowtie H\to H'\bowtie H$ and $B':H'\bowtie H\to H'\bowtie H$ be idempotent homomorphisms of Hopf algebras. The pair $(B,B'
)$ is called a \textbf{projection homomorphism pair on $(H,H',\rhd,\ahd)$} if it satisfies:
\begin{equation*}
B(x_1)B'(x_2)=x
\end{equation*}
for any $x\in H'\bowtie H$.
\end{defi}

In the next Proposition, we prove that every projection homomorphism pair on a matched pair of Hopf algebras $H'\bowtie H$ induces a Rota-Baxter operator of weight $-1$ on $H'\bowtie H$.
\begin{pro}\label{DMP}
Let $(H,H',\rhd,\ahd)$ be a matched pair of Hopf algebras. Let $(B,B')$ be a projection homomorphism pair on $(H,H',\rhd,\ahd)$. Then the following statements hold:
\begin{itemize}
\item[(a)]$B$ and $B'$ are both Rota-Baxter operators of $-1$ on $H'\bowtie H$ such that $B(B'(x))=B'(B(x))=\epsilon(x)1_{H'\bowtie H};$
\item[(b)]  $B'(x_1)B(x_2)=x,\ \forall x\in H'\bowtie H.$ 

\end{itemize}
\end{pro}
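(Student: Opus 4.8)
The plan is to pass to the convolution algebra $(\End(G),\ast)$, where I write $G:=H'\bowtie H$; recall $G$ is cocommutative, so its antipode $S$ satisfies $S^2=\id$ and $\Delta\circ S=(S\otimes S)\circ\Delta$. Two observations drive everything. First, the defining identity of a projection homomorphism pair, $B(x_1)B'(x_2)=x$, is exactly the statement $B\ast B'=\id_G$ in $\End(G)$. Second, every Hopf algebra endomorphism $f$ of $G$ is $\ast$-invertible with $f^{-1}=f\circ S$, since $f(x_1)f(S(x_2))=f(x_1S(x_2))=\epsilon(x)1$ and symmetrically; in particular $B$, $B'$, and $\id_G$ (with $\id_G^{-1}=S$) are all invertible in $\End(G)$.

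From here it is algebra in $\End(G)$. Convolving $B\ast B'=\id_G$ on the left by $B^{-1}=B\circ S$ gives $B'=(B\circ S)\ast\id_G$, i.e.\ $B'(x)=B(S(x_1))\,x_2$; convolving on the right by $(B')^{-1}=B'\circ S$ gives $B=\id_G\ast(B'\circ S)$, i.e.\ $B(x)=x_1\,B'(S(x_2))$. For part (b) I take $\ast$-inverses of both sides of $B\ast B'=\id_G$, obtaining $(B'\circ S)\ast(B\circ S)=S$, that is $B'(S(x_1))B(S(x_2))=S(x)$; substituting $S(x)$ for $x$ and using $S^2=\id$ together with cocommutativity yields $B'(x_1)B(x_2)=x$. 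For the identities $B\circ B'=B'\circ B=u\epsilon$ in part (a), I feed the two explicit formulas above into $B$, resp.\ $B'$, and use idempotency: $B(B'(x))=B(B(S(x_1))x_2)=B(S(x_1))B(x_2)=B(S(x_1)x_2)=\epsilon(x)1$, and dually $B'(B(x))=B'(x_1B'(S(x_2)))=B'(x_1)B'(S(x_2))=\epsilon(x)1$, each step using only that $B,B'$ are algebra maps with $B^2=B$, $(B')^2=B'$.

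It remains to check that $B$ — and, by the identical argument, $B'$ — is a Rota-Baxter operator of weight $-1$ on $G$. Being a Hopf algebra endomorphism, $B$ is a coalgebra map, so only \eqref{RBHRS} must be verified. Expanding the right-hand side with $B$ an algebra map and using $B^2=B$ together with $B\circ S=S\circ B$ gives $B(B(x_1)\,y\,S(B(x_2))\,x_3)=B(x_1)B(y)S(B(x_2))B(x_3)$; since $B$ is a coalgebra map, the counit/antipode axiom applied to $B$ of the relevant tensor leg collapses $S(B(x_2))B(x_3)$ through $\epsilon(x_2)1$, leaving $B(x_1)\epsilon(x_2)B(y)=B(x)B(y)$, which is the left-hand side. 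Note that this last computation uses only that $B$ is an idempotent Hopf algebra endomorphism — the pairing condition $B\ast B'=\id_G$ is genuinely needed only for part (b) and for $B\circ B'=u\epsilon$.

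The conceptual core is the single observation that $B\ast B'=\id_G$ with both factors $\ast$-invertible; after that there is no substantive obstacle, only two points requiring care: keeping the Sweedler indices straight in the verification of \eqref{RBHRS}, and invoking the cocommutativity of $G$ precisely where $S^2=\id$ and $\Delta\circ S=(S\otimes S)\circ\Delta$ enter (namely in passing from $B'(S(x_1))B(S(x_2))=S(x)$ to $B'(x_1)B(x_2)=x$).
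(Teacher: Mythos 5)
Your proof is correct, and its two main computations --- the verification of \eqref{RBHRS} from idempotency plus multiplicativity of $B$, and the collapses $B(B'(x))=B(S(x_1)x_2)=\epsilon(x)1$, $B'(B(x))=B'(x_1S(x_2))=\epsilon(x)1$ --- coincide step for step with the paper's. Where you genuinely depart is in the organization and in part (b). The paper works entirely with Sweedler-index manipulations: it derives $B(x)=x_1B'(S(x_2))$ by convolving the defining identity with $S\circ B'$ (without naming it that), and then proves (b) by invoking the companion formula $B'(x)=x_1B(S(x_2))$, which it attributes to ``the definition of $(B,B')$'' even though it does not follow from the defining identity by a one-line substitution (the direct consequence is $B'(x)=B(S(x_1))\,x_2$; passing to $x_1B(S(x_2))$ needs the extra computation $x_1B(S(x_2))=x_1S(x_2)B'(x_3)=\epsilon(x)B'(x)$, or something equivalent). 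Your route through the convolution algebra avoids this entirely: once you observe that every unital algebra endomorphism is $\ast$-invertible with inverse $f\circ S$, the identity $B\ast B'=\id$ yields (b) by anti-multiplicativity of convolution inversion, $(B'\circ S)\ast(B\circ S)=S$, followed by the substitution $x\mapsto S(x)$ (where, as you correctly flag, cocommutativity is what lets you realign the Sweedler legs). This is cleaner than the paper's argument and in fact supplies the justification the paper elides; the trade-off is only that you must be careful that $\ast$-inverses are two-sided and unique, which you are. One cosmetic remark: in your final display for \eqref{RBHRS} the intermediate expression should read $B(x_1)B(y)\epsilon(x_2)$ rather than $B(x_1)\epsilon(x_2)B(y)$, but since $\epsilon(x_2)$ is a scalar this is immaterial.
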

\begin{proof}
As $B$ is an idempotent Hopf homomorphism, we have 
\begin{equation*}
\begin{aligned}
B\left(B(x_1)y S(B(x_2))x_3\right)=&B(B(x_1))B(y) B(S(B(x_2)))B(x_3)\\=&B(B(x_1))B(y)S(B(x_2))B(x_3)\\
=&B(B(x_1))B(y)\epsilon(x_2)\\
=&B(x)B(y)
\end{aligned}
\end{equation*}
for any $x,y\in H'\bowtie H.$
This means $B$ and $B'$ are both Rota-Baxter of weight $-1$ on $H$. As $B(x_1)B'(x_2)=x$, we have $$B(x)=B(x_1)B'(x_2)S(B'(x_3))=x_1S(B'(x_2))=x_1B'(S(x_2)).$$  
Next, we have
\begin{equation*}
B(B'(x))=B'(x_1)B'(S(B'(x_2)))=B'(x_1)S(B'^{2}(x_2))=B'(x_1)S(B'(x_2)) =\epsilon(x)1_{H'\bowtie H}
\end{equation*}
for any $x\in H'\bowtie H.$ It is similar to show that $B'(B(x))=\epsilon(x)1_{H'\bowtie H}.$ This proves (a).
Finally, by the definition of $(B,B'),$ we obtain that $$B'(x_1)B(x_2)=x_1B(S(x_2))B(x_3)=x_1S(B(x_2))B(x_3)=x,$$ which proves (b).
\end{proof}

Next, we give an equivalent description of projection homomorphism pairs on matched pairs of Hopf algebras.
\begin{pro}\label{CMM}
Let $(H,H')$ be a matched pair of Hopf algebras. Let $B:H'\bowtie H\to H'\bowtie H$ be an idempotent homomorphism of Hopf algebras and $B':H'\bowtie H\to H'\bowtie H$ be the operator given by 
\begin{equation*}
B'((x,y))=(x_1,y_1)B\left(S((x_2,y_2))\right),\ \forall x,y\in H.
\end{equation*}
Then the following statements are equivalent:
\begin{itemize}
\item[(a)] $(B,B')$ is a projection homomorphism pair on $(H_+,H_-,\rhd,\ahd);$
\item[(b)] The image of $B$ and $B'$ are commutative, that is,
\begin{equation*}
B((x,y))B'((x',y'))=B'((x',y'))B((x,y)),\ \forall (x,y),(x',y')\in H'\bowtie H.
\end{equation*}
\end{itemize}
\end{pro}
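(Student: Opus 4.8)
The plan is to establish the equivalence by translating the defining condition of a projection homomorphism pair into a commutativity statement, exploiting that $B$ is already a given idempotent Hopf algebra endomorphism. First I would record the basic consequences of the construction of $B'$: since $B$ is an idempotent coalgebra map and the convolution identity $\mathrm{id} = \mathrm{id}_1 * (S \circ \mathrm{id}_2)$ holds, the formula $B'(x) = x_1 \, B(S(x_2))$ is the ``complementary projection''; applying the counit and using that $B$ preserves $\epsilon$, one checks $B'$ is a coalgebra map. The key structural fact, as in Proposition~\ref{DMP}, is the chain $B(x) = x_1 \, B'(S(x_2))$ is \emph{equivalent} to $x = B(x_1)B'(x_2)$, so condition (a) amounts precisely to saying $B'$ (as just defined) satisfies $x = B(x_1)B'(x_2)$ and is a Hopf algebra homomorphism. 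I would therefore split the argument: the convolution bookkeeping shows $x = B(x_1)B'(x_2)$ holds \emph{automatically} from the definition of $B'$ (it is just $x_1 S(B(x_2)) B(x_3) = x_1 \epsilon(x_2) = x$ after using that $B$ is a coalgebra map and an algebra homomorphism, so $B(S(x_2))B(x_3) = B(S(x_2)x_3) = \epsilon(x_2)1$); thus the only content of (a) is that $B'$ is an algebra homomorphism.

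Next I would show $B'$ is automatically compatible with the coalgebra structure and the unit regardless of (a) or (b), so that the entire question reduces to: \emph{$B'$ is multiplicative if and only if $\im B$ and $\im B'$ commute}. For the direction (b) $\Rightarrow$ (a), I would compute $B'(xy)$ using cocommutativity of $H' \bowtie H$ (which holds since both $H'$ and $H$ are cocommutative, hence so is the double cross product) to reorganize $x_1 y_1 \, B(S(x_2 y_2)) = x_1 y_1 \, B(S(y_2)) \, B(S(x_2))$, and compare with $B'(x)B'(y) = x_1 B(S(x_2)) \, y_1 B(S(y_2))$; the discrepancy is exactly an interchange of the middle factors $B(S(x_2))$ and $y_1$, which I would like to replace — via the identity $y = B(y_1)B'(y_2)$ — and then the commutativity hypothesis (b) lets me slide the $B$-image factor past the $B'$-image factor. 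For the direction (a) $\Rightarrow$ (b), I would use that (a) gives $B(B'(x)) = \epsilon(x)1 = B'(B(x))$ from Proposition~\ref{DMP}(a), together with multiplicativity of both $B$ and $B'$: given $u \in \im B$ and $v \in \im B'$, write $u = B(a)$, $v = B'(b)$, and compute $uv = B(a)B'(b)$; applying $B$ and $B'$ and using that they annihilate each other's images, one extracts that $uv$ and $vu$ have the same image under the ``reconstruction'' map $x \mapsto B(x_1)B'(x_2) = x$, forcing $uv = vu$.

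The main obstacle I anticipate is the Sweedler-index bookkeeping in the (b) $\Rightarrow$ (a) direction: to pass from $B'(xy)$ to $B'(x)B'(y)$ one must insert the factorization $y = B(y_1)B'(y_2)$ at the right spot, commute a $\im B$-factor past a $\im B'$-factor using (b), and then recognize the result as a product of two complementary projections — this requires care because $B$ and $S$ interact and because the cocommutativity is used several times to permute legs of iterated coproducts. A secondary subtlety is making sure the ``automatic'' parts (that $B'$ is a coalgebra map and unital, and that $B'^2 = B'$) genuinely do not require either hypothesis; $B'^2 = B'$ will follow from $B \circ B' = \epsilon(-)1$ once multiplicativity of $B'$ is known, but in the equivalence statement I should be careful to phrase things so that no circularity arises — I would establish $B'$ unital and coalgebra-compatible first, unconditionally, then handle the multiplicativity/commutativity equivalence, and finally note idempotency of $B'$ as a consequence on the (a) side. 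Once these pieces are in place the proof is a finite, if somewhat intricate, string-diagram computation.
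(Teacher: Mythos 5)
Your overall strategy is the paper's: reduce (a) to the multiplicativity of $B'$ plus the reconstruction identity, prove coalgebra compatibility and idempotency of $B'$ unconditionally, and derive the commutativity in (a)$\Rightarrow$(b) by applying a reconstruction identity to the product $B(a)B'(b)$. However, there is a concrete error at the foundation of your reduction: you claim that $x = B(x_1)B'(x_2)$ ``holds automatically,'' justified by the computation $x_1\,B(S(x_2))\,B(x_3) = x_1\,\epsilon(x_2) = x$. That computation establishes the \emph{reversed} identity $B'(x_1)B(x_2) = x$ (which is indeed automatic from the formula $B'(z)=z_1 B(S(z_2))$ and $B$ being an algebra map; this is exactly Proposition~\ref{DMP}(b)). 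The defining condition of a projection homomorphism pair is the other order, $B(x_1)B'(x_2) = x$, which expands to $B(x_1)\,x_2\,B(S(x_3))$ and is \emph{not} automatic: for the group algebra of a nonabelian group with $B$ induced by an idempotent group endomorphism $\beta$, on a grouplike $g$ it reads $\beta(g)\,g\,\beta(g)^{-1}$, which differs from $g$ whenever $\beta(g)$ and $g$ do not commute. So the identity $B(x_1)B'(x_2)=x$ is genuine content of (a), not bookkeeping, and your reduction ``(a) is equivalent to $B'$ being an algebra homomorphism'' is false as stated.

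The confusion propagates into both directions of your argument. In (b)$\Rightarrow$(a) you propose to substitute $y = B(y_1)B'(y_2)$, which is precisely the identity not yet available at that stage; you must instead use the automatic $y = B'(y_1)B(y_2)$ and then invoke (b) to slide factors (the computation does go through this way, and one must additionally derive $B(x_1)B'(x_2)=x$ from (b) via cocommutativity: $B(x_1)B'(x_2)=B'(x_2)B(x_1)=B'(x_1)B(x_2)=x$ — a step absent from your outline). In (a)$\Rightarrow$(b), applying the map $z\mapsto B(z_1)B'(z_2)$ to $z=B(a)B'(b)$ only returns $B(a)B'(b)$ and yields no information; to produce $B'(b)B(a)$ one must apply the \emph{other} reconstruction $z\mapsto B'(z_1)B(z_2)=z$ from Proposition~\ref{DMP}(b), as the paper does. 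All of this is repairable without changing your architecture, but as written the proposal rests on a mislabelled identity and would not compile into a correct proof.
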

\begin{proof}
(a)$\Rightarrow$ (b) For any $(x,y),(x',y')\in H'\bowtie H,$ by (a) of Proposition \ref{DMP}, we have
\begin{equation*}
B\left(B((x,y))B'((x',y'))\right)
=B(B((x,y)))B(B'((x',y')))=\epsilon(x')\epsilon(y')B((x,y)).
\end{equation*}
It is similar to show that 
\begin{equation*}
B'\left(B((x,y))B'((x',y'))\right)
=\epsilon(x)\epsilon(y)B'((x',y')).
\end{equation*}
It follows from (b) of Proposition \ref{DMP} that 
\begin{equation*}
\begin{aligned}
B((x,y))B'((x',y'))
=&B'\left(B((x_1,y_1))B'((x'_1,y'_1))\right)B\left(B((x_2,y_2))B'((x'_2,y'_2))\right)
=B'((x',y'))B((x,y)).
\end{aligned}
\end{equation*}
(b) $\Rightarrow$ (a)
It follows from (a) of Proposition \ref{DMP} that $B$ is a Rota-Baxter operator of weight $-1$ on $H.$ Then by Proposition \ref{WTB}, $B'$ is a Rota-Baxter operator of weight $-1$ on $H.$ By Proposition \ref{AT}, $B'$ is a coalgebra map. It remains to show that $B'$ is an idempotent homomorphism of algebras. It follows directly from the definition of $B'$ that 
$$
\begin{aligned}
B\circ B'((x,y))
&=B\!\left((x_1,y_1)\,S\!\left(B((x_2,y_2))\right)\right)\\
&=B((x_1,y_1))B(S(B((x_2,y_2))))\\
&=B((x_1,y_1))S(B^2((x_2,y_2)))\\
&=B((x_1,y_1))S(B((x_2,y_2)))\\
&=\epsilon((x,y))\,1_{H'\bowtie H}.
\end{aligned}
$$

Then we have 
$$
\begin{aligned}
B'^{2}((x,y))&=B'((x_1,y_1))B\circ S\circ B'((x_2,y_2))=B'((x_1,y_1))S\circ B\circ B'((x_2,y_2))\\&=B'((x_1,y_1))\epsilon((x_2,y_2))=B'((x,y)).
\end{aligned}
$$
This means $B'$ is idempotent. Finally we prove that $B'$ is a homomorphism of algebras. It follows from (b) that 
\begin{equation}\label{C1}
\begin{aligned}
(x,y)(x',y')=&B'((x_1,y_1))B((x_2,y_2))B'((x'_1,y'_1))B((x'_2,y'_2))\\
=&B'((x_1,y_1))B'((x'_1,y'_1))B((x_2,y_2))B((x'_2,y'_2))
.
\end{aligned}
\end{equation}
On the other hand, by the definition, we have
\begin{equation}\label{C2}
\begin{aligned}
(x,y)(x',y')=B'((x_1,y_1)(x'_1,y'_1))B((x_2,y_2)(x'_2,y'_2))=B'((x_1,y_1)(x'_1,y'_1))B((x_2,y_2))B((x'_2,y'_2)).
\end{aligned}
\end{equation}
Comparing\eqref{C1} and \eqref{C2}, we obtain that $B'$ is a homomorphism of algebras.
\end{proof}

In the next theorem, we prove that every projection homomorphism pair $(B,B')$ on a matched pair of Hopf algebras $(H,H',\rhd,\ahd)$ induces a Rota-Baxter Hopf algebra of weight $-1$ on $\im B.$

\begin{thm}\label{RBP}
Let $(H,H',\rhd,\ahd)$ be a matched pair of Hopf algebras and $(B,B')$ be a projection homomorphism pair on $(H,H',\rhd,\ahd)$. Let $K=\im B.$ 
Then the operator $C:K\to K$ defined by
\begin{equation*}
C((x,y))=B((x,\epsilon(y)1_H)),\ \forall (x,y)\in K,
\end{equation*}
is a Rota-Baxter operator of weight $-1$ on $K.$
\end{thm}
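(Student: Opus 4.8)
The plan is to verify directly that the map $C$ defined on $K = \im B$ is a coalgebra map and satisfies the Rota-Baxter identity \eqref{RBHRS} for the Hopf algebra structure on $K$ (which is the one inherited from $H' \bowtie H$, since $\im B$ is a Hopf subalgebra by Proposition~\ref{BH}, using that $B$ is a Rota-Baxter operator of weight $-1$ by Proposition~\ref{DMP}(a)). First I would observe that the assignment $(x,y) \mapsto (x, \epsilon(y)1_H)$ is itself a coalgebra map $H' \bowtie H \to H' \bowtie H$ (it is the composite of $\id_{H'}$ with the counit–unit on $H$, both coalgebra maps), so $C$, being a composite of this projection with $B$, is a coalgebra map; one must also check $C$ lands in $K$, which is immediate since $C = B \circ (\text{projection})$ and $\im C \subseteq \im B = K$.

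Next I would record the key structural fact that makes $C$ behave like a Rota-Baxter operator: because $B$ is an idempotent Hopf algebra homomorphism, for any $(x,y) \in K$ we have $B((x,y)) = (x,y)$, and more importantly the element $(x, \epsilon(y)1_H)$ should be expressible via the matched-pair product in terms of $(x,1_H)$ and the relation $(a \otimes x)(1_{H'} \otimes 1_H) = a \otimes x$. The crucial computation is to show that for $(x,y), (x',y') \in K$,
\[
C((x,y))\,C((x',y')) = C\big(C((x,y)_1)\,(x',y')\,S(C((x,y)_2))\,(x,y)_3\big),
\]
which I would attack by feeding everything through the identity $B(B(a_1)\,b\,S(B(a_2))\,a_3) = B(a)B(b)$ valid on all of $H'\bowtie H$ (Proposition~\ref{DMP}(a)), combined with the fact that $B$ restricted to $K$ is the identity and that $B$ is multiplicative. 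The point is that applying $B$ to the right-hand side first collapses the "$*_B$"-type expression, after which one uses that the second-tensor-factor component has been killed by the counit on both sides in a compatible way; the projection $(x,y)\mapsto(x,\epsilon(y)1_H)$ commutes suitably with comultiplication and with $B$ on $K$.

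The main obstacle I expect is bookkeeping: one must check that $\im C$ genuinely sits inside $K$ as a sub-coalgebra and that the Rota-Baxter identity does not require $C$ to be defined outside $K$ — i.e., that every term $C((x,y)_1)\,(x',y')\,S(C((x,y)_2))\,(x,y)_3$ lies in $K$ before $C$ is applied, which follows because $K$ is a Hopf subalgebra and $\im C \subseteq K$. The subtle part is verifying that the "coalgebra-map" condition $\epsilon(C((x,y))) = \epsilon((x,y))$ holds, which reduces to $\epsilon_{H'}(x)\epsilon_H(\epsilon_H(y)1_H) = \epsilon_{H'}(x)\epsilon_H(y)$ — true since $\epsilon_H(1_H)=1$ — and then matching Sweedler indices in the displayed Rota-Baxter computation so that the extra counit on the $H$-component is transparent. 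Once these compatibilities are in place, the identity reduces to the already-established Rota-Baxter property of $B$ on $H'\bowtie H$, and the proof concludes.
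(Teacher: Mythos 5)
Your setup is fine as far as it goes: the coalgebra-map part is handled exactly as in the paper ($C=B\circ P$ with $P((x,y))=(x,\epsilon(y)1_H)$, both coalgebra maps, and $\im C\subseteq \im B=K$), and the Rota--Baxter identity is indeed where all the work lies. But the central reduction you propose does not go through. You claim the identity ``reduces to the already-established Rota--Baxter property of $B$ on $H'\bowtie H$'' because the projection $P$ ``commutes suitably with comultiplication and with $B$ on $K$.'' Neither half of this is available. Applying the Rota--Baxter identity of $B$ to $a=(x,\epsilon(y)1_H)$ and $b=(x',\epsilon(y')1_H)$ yields $C((x,y))C((x',y'))=B\big(C((x_1,y_1))\,(x',\epsilon(y')1_H)\,S(C((x_2,y_2)))\,(x_3,\epsilon(y_3)1_H)\big)$, whereas the right-hand side you must reach is $B\big(P\big(C((x_1,y_1))\,(x',y')\,S(C((x_2,y_2)))\,(x_3,y_3)\big)\big)$; matching the two would require $P$ to be essentially multiplicative on $H'\bowtie H$, which it is not: $P((x,y)(x',y'))=(x(y\rhd x')\epsilon(y'),1_H)$ while $P((x,y))P((x',y'))=(xx'\epsilon(yy'),1_H)$, and these differ unless $y\rhd x'=\epsilon(y)x'$. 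Likewise $P$ does not commute with $B$ on $K$: for $v\in K$ one has $P(B(v))=P(v)$ but $B(P(v))=C(v)$, and these agree only if $P(v)\in\im B$, which would force $C=P|_K$ and trivialize the whole construction.

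The paper's proof gets around this by a mechanism your sketch never touches. After replacing each factor $u\in K$ by $B(u)$ and pulling $B$ out by multiplicativity, it uses the double-cross-product factorization $(x_3,y_3)=(x_3,\epsilon(y_3)1_H)(\epsilon(x_4)1_{H'},y_4)$ together with the antipode cancellation $S((x_2,\epsilon(y_2)1_H))(x_3,\epsilon(y_3)1_H)=\epsilon(x_2)\epsilon(y_2)1$ to reduce the argument of the outer $C$ to $v'=(x_1,\epsilon(y_1)1_H)(x',y')(\epsilon(x_2)1_{H'},y_2)$; it then shows $B(v')=v'$, and this is precisely where the \emph{second} member $B'$ of the projection pair is indispensable (via $v'=B(v'_1)B'(v'_2)$ and $B'\circ B=\epsilon\cdot 1$) --- yet $B'$ never appears in your main computation; finally it evaluates $v'=(xx',y'y)$ explicitly, so that $C(v')=B((xx',\epsilon(yy')1_H))=C((x,y))C((x',y'))$. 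Without these three ingredients (the factorization and cancellation, the $B'$-argument that $v'\in\im B$, and the explicit evaluation of the product), the Rota--Baxter identity for $C$ is not established, so your proposal has a genuine gap at its central step.
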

\begin{proof}
First, we have
\begin{equation*}
\begin{aligned}
\Delta C((x,y))&=\Delta(B((x,\epsilon(y)1_H)))=(B((x_1,\epsilon(y_1)1_H)))\otimes (B((x_2,\epsilon(y_2)1_H)))\\
&=C((x_1,y_1))\otimes C((x_2,y_2)).
\end{aligned}
\end{equation*}
This means $C$ is a coalgebra map.
For any $(x,y),(x',y')\in K,$ we have
\begin{equation*}
\begin{aligned}
&C\left(C((x_1,y_1))(x',y')S(C((x_2,y_2)))(x_3,y_3)\right)\\
=&C\left(B((x_1,\epsilon(y_1)1_H)) (x',y')S(B((x_2,\epsilon(y_2)1_H)))(x_3,y_3)   \right)\\
=&C\left(B((x_1,\epsilon(y_1)1_H)) B((x',y'))B(S((x_2,\epsilon(y_2)1_H)))B((x_3,y_3)) \right)\ (\textit{by the definition of $(B,B')$})\\
=&C\circ B\left((x_1,\epsilon(y_1)1_H)(x',y')S((x_2,\epsilon(y_2)1_H))(x_3,y_3) \right)\ (\textit{by the definition of $(B,B')$}).\\
\end{aligned}
\end{equation*}
It follows that 
\begin{equation*}
\begin{aligned}
&C\left(C((x_1,y_1))(x',y')S(C((x_2,y_2)))(x_3,y_3)\right)\\
=&C\circ B\left((x_1,\epsilon(y_1)1_H)(x',y')S((x_2,\epsilon(y_2)1_H))(x_3,\epsilon(y_3)1_H)(\epsilon(x_4)1_{H'}, y_4) \right)\\
=&C\circ B\left((x_1,\epsilon(y_1)1_H)(x',y')(\epsilon(x_2)1_{H'}, y_2) \right).
\end{aligned}
\end{equation*}

By Proposition \ref{DMP}, we have
\begin{equation*}
\begin{aligned}
&(x_1,\epsilon(y_1)1_H)(x',y')(\epsilon(x_2)1_{H'}, y_2)\\=&B\left((x_1,\epsilon(y_1)1_H)(x'_1,y'_1)(\epsilon(x_2)1_{H'}, y_2)\right)B'\left((x_3,\epsilon(y_3)1_H)(x'_2,y'_2)(\epsilon(x_4)1_{H'}, y_4)\right)\\=&B\left((x_1,\epsilon(y_1)1_H)B((x'_1,y'_1))(\epsilon(x_2)1_{H'}, y_2)\right)B'\left((x_3,\epsilon(y_3)1_H)\right)B'\left(B((x'_2,y'_2))\right)B'\left((\epsilon(x_4)1_{H'}, y_4)\right)\\
&(\textit{by the definition of $(B,B')$ and (a) of Proposition \ref{DMP}})\\
=&B\left((x_1,\epsilon(y_1)1_H)B(x'_1,y'_1)(\epsilon(x_2)1_{H'}, y_2)\right)B'\left((x_3,\epsilon(y_3)1_H)\right)\epsilon((x'_2,y'_2))1_{H'\bowtie H}B'\left((\epsilon(x_4)1_{H'}, y_4)\right)
\end{aligned}
\end{equation*}
It follows that
\begin{equation*}
\begin{aligned}
&(x_1,\epsilon(y_1)1_H)(x',y')(\epsilon(x_2)1_{H'}, y_2)\\
=&B\left((x_1,\epsilon(y_1)1_H)B(x'_1,y'_1)(\epsilon(x_2)1_{H'}, y'_2)\right)B'\left((x_3,\epsilon(y_3)1_H)\right)B'\left((\epsilon(x_4)1_{H'}, y_4)\right)\\
=&B\left((x_1,\epsilon(y_1)1_H)B(x',y')(\epsilon(x_2)1_{H'}, y_2)\right)B'\left((x_3,y_3)\right)\\
=&B\left((x_1,\epsilon(y_1)1_H)B(x',y')(\epsilon(x_2)1_{H'}, y_2)\right)B'\left(B((x_3,y_3))\right)\\
=&B\left((x_1,\epsilon(y_1)1_H)B(x',y')(\epsilon(x_2)1_{H'}, y_2)\right)\epsilon((x_3,y_3))1_
{H'\bowtie H} \ (\textit{by (a) of Proposition \ref{DMP}})\\
=&B\left((x_1,\epsilon(y_1)1_H)(x',y')(\epsilon(x_2)1_{H'}, y_2)\right)
\in H.
\end{aligned}
\end{equation*}

Then we have
\begin{equation*}
\begin{aligned}
&C\left(C((x_1,y_1))(x',y')S(C((x_2,y_2)))(x_3,y_3)\right)\\
=&C\left((x_1,\epsilon(y_1)1_H)(x',y')(\epsilon(x_2)1_{H'}, y_3) \right)\\
=&C\left((x_1,\epsilon(y_1)1_H)(x'_1,\epsilon(y'_1)1_H)(\epsilon(x'_2)1_{H'},y'_2)(\epsilon(x_2)1_{H'}, y_2) \right)\\
=&C\left((x_1 x'_1,\epsilon(y_1y'_1)1_H)(\epsilon(x'_2x_2)1_{H'},y'_2y_2)\right)\\
\end{aligned}
\end{equation*}
It follows that 
\begin{equation*}
\begin{aligned}
&C\left(C((x_1,y_1))(x',y')S(C((x_2,y_2)))(x_3,y_3)\right)\\
=&C\left((xx',y'y)\right)=B\left((xx',\epsilon(y'y)1_H)\right)=B\left((xx',\epsilon(y')\epsilon(y)1_H)\right)\\=&B\left((xx',\epsilon(yy')1_H)\right)=C((xx',yy'))=C((x,y))C((x',y')).
\end{aligned}
\end{equation*}
This proves that $C$ is a Rota-Baxter operator of weight $-1$ on $K$.
\end{proof}

To prove Theorem \ref{MT}, we introduce the following two lemmas.

\begin{lem}\label{GG1}
Let $(H,B)$ be a Rota-Baxter Hopf algebra of weight $-1$ and $(H_+,H_-,\rhd,\ahd)$ be the matched pair of Hopf algebras on $(H,B).$ Then the following equalities hold:
\begin{itemize}
\item[(a)]
$\wtd{B}\left(S(B( S(x_1)))\right)\left(S\left(B\left( \wtd{B}(x_2)\right)\right)\rhd \wtd{B}(S(y))\right)=\wtd{B}\left(S(y)S(B(S(x)))\right);$

\item[(b)] $\wtd{B}(S(y_1))\left(
S(B(y_2))\rhd \wtd{B}\left(S(B( S(x)))\right)
\right)=\wtd{B}\left(S(y)S(B(S(x)))\right);$
\item[(c)]
$\left(S\left(B\left(\wtd{B}(x)\right)\right)\ahd \wtd{B}(S(y_1))\right)S(B(y_2))  =S(B(y\wtd{B}(x)));$

\item[(d)] 
$\left(
S(B(y_3))\ahd \wtd{B}\left(S(B( S(x_1)))\right)
\right)S\left(B\left(\wtd{B}(x_3)\right)\right)=S(B(y\wtd{B}(x)))$
\end{itemize}
for any $x,y\in H.$
\end{lem}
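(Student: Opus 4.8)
The plan is to prove each of (a)--(d) by a direct Sweedler computation, unfolding the actions $\rhd$ and $\ahd$ by their definitions and then collapsing the resulting products by the Rota-Baxter identity \eqref{RBHRS}. The only inputs are facts already at hand: \eqref{RBHRS}, the relations \eqref{BTTB}, \eqref{HBWBD} and \eqref{HBWBC}, Proposition~\ref{BBCV}, the fact that $B\colon H_B\to H$ and $\wtd{B}\colon H_{\wtd{B}}\to H$ are Hopf algebra homomorphisms (Proposition~\ref{BH}), which yields $S\circ B=B\circ S_B$, and the cocommutativity of $H$ and of $H_B$ (Proposition~\ref{AT}), which yields $S^2=\id$, $S_B^2=\id$ and the fact that $S$, $S_B$ are coalgebra maps. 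The uniform scheme I would follow is: (1)~rewrite every element of $H_+$ appearing to the left of a $\rhd$ or an $\ahd$ in the form $B(\text{--})$, using $S\circ B=B\circ S_B$ to absorb outer antipodes and Proposition~\ref{BBCV} to trade $\wtd{B}\circ S\circ B\circ S$ for $B\circ\wtd{B}$ — this alignment is needed precisely in (a) and (d), where it forces the several legs of $x$ to come from one comultiplication of $B(\wtd{B}(x))$; (2)~expand the action by its definition, so that the left-hand side becomes $\wtd{B}$ (in (a),(b)) or $S\circ B$ (in (c),(d)) applied to a word in $B$-terms, $\wtd{B}$-terms and $S_B$-terms; (3)~remove the $S_B$'s with $S_B^2=\id$ and convert the surviving $\wtd{B}$-terms to $B$-terms with \eqref{HBWBD}; (4)~collapse the resulting product of $B$-blocks in $H$ by \eqref{RBHRS}, and let the antipode axioms together with \eqref{BTTB} telescope the leftover Sweedler legs down to the right-hand side.

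Running through the cases: for (a), Proposition~\ref{BBCV} rewrites $\wtd{B}(S(B(S(x_1))))$ as $B(\wtd{B}(x_1))$ and Proposition~\ref{BH} rewrites $S(B(\wtd{B}(x_2)))$ as $B(S_B(\wtd{B}(x_2)))$, so that with $a:=B(\wtd{B}(x))$ the left-hand side reads $a_1\big(S(a_2)\rhd\wtd{B}(S(y))\big)$; unfolding $\rhd$, then \eqref{HBWBD} with the antipode axiom, then a couple of applications of \eqref{RBHRS} produce $\wtd{B}(S(y)\,S(B(S(x))))$. For (b), write $S(B(y_2))=B(S_B(y_2))$ and apply the definition of $\rhd$ directly to $\wtd{B}(S(B(S(x))))$ (Proposition~\ref{BBCV} is not needed here); expanding both $\wtd{B}$-factors by \eqref{HBWBD}, multiplying out, and collapsing the inner $B$-products by \eqref{RBHRS} again yields $\wtd{B}(S(y)\,S(B(S(x))))$. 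For (c), write $S(B(\wtd{B}(x)))=B(S_B(\wtd{B}(x)))$, apply the definition of $\ahd$, use $S_B^2=\id$, and multiply on the right by $S(B(y_2))$; the cancellations $\wtd{B}(S(y_1))\,S(B(y_2))=S(y)$ and $B(y_1)\,S(\wtd{B}(S(y_2)))=y$ supplied by \eqref{BTTB}, together with one application of \eqref{RBHRS}, collapse everything to $S(B(y\,\wtd{B}(x)))$. Case (d) proceeds as (c), except that I would first use Proposition~\ref{BBCV} to replace $\wtd{B}(S(B(S(x_1))))$ by $B(\wtd{B}(x_1))$, so that the legs of $x$ come from one comultiplication of $B(\wtd{B}(x))$, and then argue as before.

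The hard part is organisational rather than conceptual: at each collapsing step one must feed the right one of the two $B$-blocks into \eqref{RBHRS} first, so that the ensuing antipode cancellations and applications of \eqref{BTTB} actually telescope; the other choice makes the Sweedler bookkeeping balloon. A reliable way to fix the order of operations before writing out the general computation is to test the identity in the group-like specialisation, where $B(x)B(y)=B(B(x)\,y\,B(x)^{-1}x)$, $\wtd{B}(x)=xB(x^{-1})$ and $S_B(x)=B(x)^{-1}x^{-1}B(x)$; there each of (a)--(d) can be verified in a few lines, and the pattern of cancellations seen there transcribes directly to the Sweedler setting.
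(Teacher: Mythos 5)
Your plan is correct and follows essentially the same route as the paper: a direct Sweedler computation that unfolds $\rhd$ and $\ahd$, collapses products via the Rota--Baxter identity \eqref{RBHRS}, and finishes with \eqref{HBWBD}, \eqref{HBWBC}, \eqref{BTTB}, Proposition~\ref{BBCV} and cocommutativity. The only cosmetic difference is in ordering: the paper merges the two $\wtd{B}$-factors first using \eqref{RBHRS} for $\wtd{B}$ and invokes Proposition~\ref{BBCV} at the end, whereas you apply Proposition~\ref{BBCV} at the outset to align the legs of $x$ and then expand $\wtd{B}$ through \eqref{HBWBD} before applying \eqref{RBHRS} for $B$.
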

\begin{proof}
For any $x,y\in H,$ we have
\begin{equation*}
\begin{aligned}
&\wtd{B}\left(S(B( S(x_1)))\right)\left(S\left(B\left( \wtd{B}(x_2)\right)\right)\rhd \wtd{B}(S(y))\right)\\
=&\wtd{B}\left(S(B( S(x_1)))\right)\wtd{B}\left(
S\left(B\left(\wtd{B}(x_2)\right)\right)S(y)B\left( \wtd{B}(x_3)\right)\right)\\
=&\wtd{B}\left(\wtd{B}\left(S(B( S(x_1)))\right)
S\left(B\left( \wtd{B}(x_4)\right)\right)S(y)B\left( \wtd{B}(x_5)\right)
S(\wtd{B}\left(S(B( S(x_2)))\right))S(B(S(x_3)) \right)\\
=&\wtd{B}\left(\wtd{B}\left(S(B( S(x_1)))\right)
S\left(B\left( \wtd{B}(x_2)\right)\right)S(y)B\left( \wtd{B}(x_3)\right)
S(\wtd{B}\left(S(B( S(x_4)))\right))S(B(S(x_5))) \right).
\end{aligned}
\end{equation*}
If follows that 
\begin{equation*}
\begin{aligned}
&\wtd{B}\left(S(B( S(x_1)))\right)\left(S\left(B\left( \wtd{B}(x_2)\right)\right)\rhd \wtd{B}(S(y))\right)\\
=&\wtd{B}\left(B(\wtd{B}(x_1))
S\left(B\left( \wtd{B}(x_2)\right)\right)S(y)B\left( \wtd{B}(x_3)\right)
S\left(B(\wtd{B}(x_4))\right )S(B(S(x_5)))\right)\\ &(\textit{by \eqref{RBHRS} and Proposition \ref{BBCV}}) \\
=&\wtd{B}\left(\wtd{B}\left(\epsilon(x_1)S(y)\epsilon(x_2)S(B(S(x_3))\right)\right)\ (\textit{by Proposition \ref{BBCV}}) \\
=&\wtd{B}\left(S(y)S(B(S(x)))\right).
\end{aligned}
\end{equation*}
This proves (a). Then we prove (b). For any $a,b\in H,$ we have
\begin{equation*}
\begin{aligned}
&\wtd{B}(S(y_1))\left(
S(B(y_2))\rhd \wtd{B}\left(S(B( S(x)))\right)
\right)\\
=&\wtd{B}(S(y_1))\wtd{B}\left(
S(B(y_2))S(B( S(x)))B(y_3)   
\right)\\
=&\wtd{B}\left(
\wtd{B}(S(y_1))S(B(y_4))S(B(S(x)))B(y_5) S(\wtd{B}(y_2))S(y_3)
\right)\\
=&\wtd{B}\left(
\wtd{B}(S(y_1))S(B(y_2))S(B(S(x)))B(y_3) S(\wtd{B}(y_4))S(y_5).
\right)\\
\end{aligned}
\end{equation*}
It follows that
\begin{equation*}
\begin{aligned}
&\wtd{B}(S(y_1))\left(
S(B(y_2))\rhd \wtd{B}\left(S(B( S(x)))\right)
\right)\\
=&\wtd{B}\left(
S(y_1)S(B(S(x)))y_2S(y_3)
\right)\ (\textit{by \eqref{HBWBD} and \eqref{HBWBC}})\\
=&\wtd{B}\left(
S(y_1)S(B(S(x)))\epsilon(y_2))
\right)\\
=&\wtd{B}\left(
S(y)S(B(S(x)))
\right).
\end{aligned}
\end{equation*}
This proves (b). The proof of (c) and (d) is similar.
\end{proof}

\begin{lem}\label{GY2}
Let $(H,B)$ be a Rota-Baxter Hopf algebra of weight $-1$ and $(H_+,H_-,\rhd,\ahd)$ be the matched pair of Hopf algebras on $(H,B).$ Then the following equality holds on $H_-\bowtie H_+:$
\begin{equation*}
\begin{aligned}
&\left(\wtd{B}\left(S(B( S(x_1)))\right),S\left(B\left( \wtd{B}(x_2)\right)\right) \right)\left(\wtd{B}(S(y_1)),S(B(y_2)) \right)\\
=&\left(\wtd{B}(S(y_1)),S(B(y_2)) \right)\left(\wtd{B}\left(S(B( S(x_1)))\right),S\left(B\left( \wtd{B}(x_2)\right)\right) \right)
\end{aligned}
\end{equation*}
for any $x,y\in H.$
\end{lem}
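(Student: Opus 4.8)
The plan is to reduce the claimed commutativity in $H_-\bowtie H_+$ to a pair of elementary identities that were already isolated in Lemma~\ref{GG1}. Recall that the product in the double cross product $H_-\bowtie H_+$ is given by $(a\otimes u)(b\otimes v)=a\,(u_1\rhd b_1)\otimes (u_2\ahd b_2)\,v$. So when I expand the left-hand side of the asserted equality, with $a=\wtd B(S(B(S(x_1))))$, $u=S(B(\wtd B(x_2)))$, $b=\wtd B(S(y_1))$, $v=S(B(y_2))$, I obtain a first tensor factor of the form $\wtd B(S(B(S(x_1))))\,\bigl(S(B(\wtd B(x_2)))\rhd \wtd B(S(y_3))\bigr)$ and a second factor $\bigl(S(B(\wtd B(x_3)))\ahd \wtd B(S(y_1))\bigr)\,S(B(y_2))$, with the appropriate Sweedler indices threaded through by cocommutativity. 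By part~(a) of Lemma~\ref{GG1} the first factor collapses to $\wtd B\bigl(S(y)S(B(S(x)))\bigr)$ (up to coproduct bookkeeping), and by part~(c) the second factor collapses to $S\bigl(B(y\,\wtd B(x))\bigr)$.

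Next I would expand the right-hand side in exactly the same way: here $a=\wtd B(S(y_1))$, $u=S(B(y_2))$, $b=\wtd B(S(B(S(x_1))))$, $v=S(B(\wtd B(x_2)))$, so the first tensor factor is $\wtd B(S(y_1))\bigl(S(B(y_2))\rhd \wtd B(S(B(S(x))))\bigr)$ and the second is $\bigl(S(B(y_3))\ahd \wtd B(S(B(S(x_1))))\bigr)\,S(B(\wtd B(x_3)))$. By part~(b) of Lemma~\ref{GG1} the first factor again becomes $\wtd B\bigl(S(y)S(B(S(x)))\bigr)$, and by part~(d) the second becomes $S\bigl(B(y\,\wtd B(x))\bigr)$. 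Comparing the two expansions factor by factor gives the claimed equality. Cocommutativity of $H$ (hence of $H_\pm$ and of the double cross product) is what lets me freely reindex the $x$- and $y$-coproducts so that the outputs of Lemma~\ref{GG1}(a)--(d) line up; this is the only subtlety, and it is routine.

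The main obstacle is purely bookkeeping: matching the coproduct indices on the two sides so that Lemma~\ref{GG1}(a)--(d) apply verbatim. Concretely, on the left-hand side the term $\wtd B(S(B(S(x_1))))$ outside the $\rhd$-action and the term $S(B(\wtd B(x_2)))$ inside it come from the \emph{first} component's coproduct $(a\otimes u)_1$, while the $S(B(\wtd B(x_3)))$ feeding the $\ahd$-action comes from $(a\otimes u)_2$; one must check that after splitting $\Delta_2(x)$ these are precisely the inputs demanded by parts~(a) and~(c). The analogous check is needed for the right-hand side with parts~(b) and~(d). Once the indices are aligned the proof is immediate, so I would simply write out both expansions, cite Lemma~\ref{GG1}(a),(c) for one side and Lemma~\ref{GG1}(b),(d) for the other, and conclude.
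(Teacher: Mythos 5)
Your plan coincides with the paper's own proof: both sides are expanded via the double cross product formula $(a\otimes u)(b\otimes v)=a\,(u_1\rhd b_1)\otimes(u_2\ahd b_2)\,v$, the left-hand side is collapsed using Lemma~\ref{GG1}(a) and (c), the right-hand side using Lemma~\ref{GG1}(b) and (d), and cocommutativity handles the reindexing of the Sweedler components. The argument is correct and the bookkeeping you flag is exactly the only content of the paper's proof.
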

\begin{proof}
For any $x,y\in H,$ we have
\begin{equation*}
\begin{aligned}
&\left(\wtd{B}\left(S(B( S(x_1)))\right),S\left(B\left( \wtd{B}(x_2)\right)\right) \right)\left(\wtd{B}(S(y_1)),S(B(y_2)) \right)\\
=&\left(\wtd{B}\left(S(B( S(x_1)))\right)\left(S\left(B\left( \wtd{B}(x_2)\right)\right)\rhd \wtd{B}(S(y_1))\right),     \left(S\left(B\left(\wtd{B}(x_3)\right)\right)\ahd \wtd{B}(S(y_2))\right)S(B(y_3))       
\right)
\\
=&\left(\wtd{B}\left(S(y_1)S(B(S(x_1)))\right),S\left(B\left(y_2\wtd{B}(x_2)\right)\right)\right) \ (\textit{by Lemma \ref{GG1}}).
\end{aligned}
\end{equation*}
On the other hand, we have

\begin{equation*}
\begin{aligned}
&\left(\wtd{B}(S(y_1)),S(B(y_2)) \right)\left(\wtd{B}\left(S(B( S(x_1)))\right),S\left(B\left( \wtd{B}(x_2)\right)\right) \right)\\
=&\left(\wtd{B}(S(y_1))\left(
S(B(y_2))\rhd \wtd{B}\left(S(B( S(x_1)))\right)
\right),  
\left(
S(B(y_3))\ahd \wtd{B}\left(S(B( S(x_1)))\right)
\right)S\left(B\left(\wtd{B}(x_3)\right)\right)
\right)\\
=&\left(\wtd{B}\left(S(y_1)S(B(S(x_1)))\right),S(B\left(y_2\wtd{B}(x_2))\right)\right) \ (\textit{by Lemma \ref{GG1}}).
\end{aligned}
\end{equation*}

\end{proof}

Next, we prove that there is a projection homomorphism pair on the matched pair of Hopf algebras on a Rota-Baxter Hopf algebra of weight $-1$.
\begin{thm}\label{MT}
Let $(H,B)$ be a Rota-Baxter Hopf algebra of weight $-1$ and $(H_+,H_-,\rhd,\ahd)$ be the matched pair of Hopf algebras on $(H,B).$ Define $C:H_-\bowtie H_+\to H_-\bowtie H_+$
by 
\begin{equation*}
C\left((\wtd{B}(x),B(y))\right)
=\left(\wtd{B}\left(\wtd{B}(x_1)B(y_1)\right),S\circ B\circ S\left(\wtd{B}(x_2)B(y_2)\right)\right),
\end{equation*}
and $\wtd{C}:H_-\bowtie H_+\to H_-\bowtie H_+$ by 
\begin{equation*}
\wtd{C}\left((\wtd{B}(x),B(y))\right)
=\left(\wtd{B}\circ S\left(B\circ S(x_1)B(y_1)\right),S\circ B\left(\wtd{B}(x_2)\wtd{B}\circ S(y_2)\right) \right)
\end{equation*}
for any $x,y\in H.$
Then $(C,\wtd{C})$ is a projection homomorphism pair on $(H_+,H_-,\rhd,\ahd).$
\end{thm}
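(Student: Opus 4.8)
The plan is to realize $C$ as a composite of two Hopf algebra homomorphisms, deduce from this that $C$ is an idempotent Hopf algebra homomorphism, and then invoke Proposition~\ref{CMM} to reduce the whole statement to a commutativity assertion that is exactly Lemma~\ref{GY2}.

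Since $H_-=\im\wtd{B}$ and $H_+=\im B$, every element of $H_-\bowtie H_+$ is of the form $(\wtd{B}(x),B(y))$ with $x,y\in H$. I introduce two linear maps: $\mu\colon H_-\bowtie H_+\to H$, $\mu((a,b))=ab$ (product taken in $H$), and $\phi\colon H\to H_-\bowtie H_+$, $\phi(x)=(\wtd{B}(x_1),S\circ B\circ S(x_2))$. Since the coproduct of $H_-\bowtie H_+$ is the tensor coproduct and $B,\wtd{B}$ are coalgebra maps, both $\mu$ and $\phi$ are coalgebra maps; that $\mu$ is multiplicative follows from \eqref{MM3} read as $(b_1\rhd a'_1)(b_2\ahd a'_2)=ba'$ for $b\in H_+$ and $a'\in H_-$, and that $\phi$ is multiplicative is precisely Proposition~\ref{MRBE}. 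Hence $\mu$ and $\phi$ are homomorphisms of Hopf algebras. The first identity of \eqref{BTTB} gives $\mu(\phi(x))=\wtd{B}(x_1)\,S(B(S(x_2)))=x$, so $\mu\circ\phi=\id_H$; and expanding $\Delta(\wtd{B}(x)B(y))=\wtd{B}(x_1)B(y_1)\otimes\wtd{B}(x_2)B(y_2)$ identifies the defining formula of $C$ as $C((\wtd{B}(x),B(y)))=\phi(\wtd{B}(x)B(y))$, that is, $C=\phi\circ\mu$. Therefore $C$ is a homomorphism of Hopf algebras, and $C^2=\phi\circ(\mu\circ\phi)\circ\mu=\phi\circ\mu=C$, so $C$ is idempotent; as in the opening computation of the proof of Proposition~\ref{DMP}, an idempotent Hopf algebra homomorphism automatically satisfies \eqref{RBHRS}, so $C$ is a Rota-Baxter operator of weight $-1$ on $H_-\bowtie H_+$.

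Now I apply Proposition~\ref{CMM} to the matched pair $(H_+,H_-,\rhd,\ahd)$ with this idempotent Hopf algebra homomorphism $C$. It then suffices to prove two things: (i) $\wtd{C}$ equals the operator $C'$ that Proposition~\ref{CMM} attaches to $C$, namely $C'((a,b))=(a,b)_1\,C(S((a,b)_2))$; and (ii) $\im C$ and $\im\wtd{C}$ commute in $H_-\bowtie H_+$. For (i): taking $(a,b)=(\wtd{B}(x),B(y))$, expand the coproduct, apply the antipode formula for $H_-\bowtie H_+$, and use $C=\phi\circ\mu$ together with $\mu\circ S=S\circ\mu$ to get $C(S((\wtd{B}(x_2),B(y_2))))=\phi(S(\wtd{B}(x_2)B(y_2)))$; multiplying out $(\wtd{B}(x_1),B(y_1))\cdot\phi(S(\wtd{B}(x_2)B(y_2)))$ in $H_-\bowtie H_+$, collapsing the occurrences of $\rhd$ and $\ahd$ via \eqref{MM1}, \eqref{MM2} and Lemma~\ref{GG1}, and simplifying with \eqref{BTTB}, \eqref{HBWBD} and \eqref{HBWBC}, produces exactly the two-component formula defining $\wtd{C}$. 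For (ii): re-indexing $\phi$ via $S$ and using cocommutativity gives $\im C=\im\phi=\{(\wtd{B}(S(y_1)),S(B(y_2))):y\in H\}$, which is the first family of elements in Lemma~\ref{GY2}; on the other side, by (i) and Proposition~\ref{WTB} the operator $\wtd{C}=C'$ is again a Rota-Baxter operator of weight $-1$, so by Proposition~\ref{BH} $\im\wtd{C}$ is a Hopf subalgebra, and one checks from the explicit formula that $\wtd{C}((\wtd{B}(x),1_{H_+}))=(\wtd{B}(S(B(S(x_1)))),S(B(\wtd{B}(x_2))))$ and $\wtd{C}((1_{H_-},B(y)))$ belong to the span of the second family in Lemma~\ref{GY2}, while the general value $\wtd{C}((\wtd{B}(x),B(y)))$, expanded through (i) and Lemma~\ref{GG1}, lies in the subalgebra generated by that family. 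Lemma~\ref{GY2} says the two families commute, hence so do the subalgebras they generate; in particular $\im C$ and $\im\wtd{C}$ commute, and Proposition~\ref{CMM} then yields that $(C,\wtd{C})$ is a projection homomorphism pair on $(H_+,H_-,\rhd,\ahd)$.

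I expect part (i) to be the main obstacle: it is a long Sweedler-notation computation verifying that the explicit two-component formula defining $\wtd{C}$ is precisely the operator $C'$ that Proposition~\ref{CMM} manufactures from $C$, and here Lemma~\ref{GG1} does the essential work of turning the matched-pair actions $\rhd$ and $\ahd$ back into multiplications in $H$ by way of \eqref{RBHRS}. A secondary technical point is the identification in (ii) of $\im\wtd{C}$ with (a subalgebra generated by) the second family of elements of Lemma~\ref{GY2}; once that and the clean identification of $\im C$ are in place, the commutativity of the two images — and therefore the projection-homomorphism-pair property — is an immediate consequence of Lemma~\ref{GY2}.
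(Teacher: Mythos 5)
Your overall architecture is the same as the paper's: establish that $C$ is an idempotent Hopf algebra homomorphism using \eqref{MM3} and Proposition~\ref{MRBE}, identify $\wtd{C}$ with the canonical complement of Proposition~\ref{CMM}, get commutativity of the two images from Lemma~\ref{GY2}, and conclude by Proposition~\ref{CMM}. Your packaging of the first half as $C=\phi\circ\mu$ with $\mu\circ\phi=\id_H$ is a genuinely nicer way to organize exactly the computations the paper performs (it also disposes of the well-definedness of $C$ for free), so that part is fine and equivalent in content.

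The soft spot is your step (i), which you yourself flag as the main obstacle and leave as a sketch. Two issues. First, Lemma~\ref{GG1} is not the right tool for a direct expansion of $z_1\,C(S(z_2))$: that lemma only evaluates products whose \emph{left} factor is one of the two special families $\left(\wtd{B}(S(B(S(x_1)))),S(B(\wtd{B}(x_2)))\right)$ or $\left(\wtd{B}(S(y_1)),S(B(y_2))\right)$, whereas in $z_1\,C(S(z_2))$ the left factor is a general element $(\wtd{B}(x_1),B(y_1))$. You would have to fall back on the raw formulas \eqref{MM1}--\eqref{MM2} and redo a computation of comparable length to the proofs of Lemmas~\ref{GG1} and \ref{GY2}. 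The paper avoids this by first rewriting $\wtd{C}$ via Proposition~\ref{ATWB} as in \eqref{13}, which exhibits $\wtd{C}((\wtd{B}(x),B(y)))$ \emph{exactly} as an element of the second family (with argument $x\ast_{\wtd{B}}S(y)$); then the product $\wtd{C}(z_1)C(z_2)$ is a product of an element of the second family with an element of the first, so Lemma~\ref{GY2} applies verbatim and yields \eqref{FE}, from which $\wtd{C}(z)=z_1\,C(S(z_2))$ follows by multiplying \eqref{FE} on the right by $S(C(z_2))$. Second, the same observation \eqref{13} makes your treatment of $\im\wtd{C}$ in (ii) unnecessarily roundabout: $\im\wtd{C}$ \emph{is} the second family of Lemma~\ref{GY2} on the nose, so there is no need to pass to the subalgebra it generates. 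None of this is a fatal error — the identity you want is true and all the needed ingredients are in the paper — but as written your plan for (i) would stall at the point where you try to invoke Lemma~\ref{GG1}, and you would need to reroute through \eqref{13} and \eqref{FE} (or carry out a longer bare-hands computation) to close it.
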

\begin{proof}
First, for any $x\in K_+$ and $y\in K_-,$ we have
$S(x)\in K_+$ and $S(y)\in K_-$. It follows that $\wtd{C}\left((\wtd{B}(x),B(y))\right)=0.$ This means that $\wtd{C}$ is well defined. Next, as $B$ and $\wtd{B}$ are both coalgebra maps, one can readily check that $C$ and $\wtd{C}$ are coalgebra maps. Then by \eqref{BTTB}, we know that $C$ and $\wtd{C}$ are idempotent. Next, we prove that $C$ is a homomorphism of Hopf algebras. For any $x,y,x',y'\in H$, we have
\begin{equation*}
\begin{aligned}
&C\left( (\wtd{B}(x),B(y))(\wtd{B}(x'),B(y'))\right)\\
=&C\left(\left(\wtd{B}(x_1) (B(y_1)\rhd \wtd{B}(x'_1)),(B(y_2)\ahd \wtd{B}(x'_2)) B(y')\right)\right)\\
=&\left(\wtd{B}\left(\wtd{B}(x_1)B(y_1)\wtd{B}(x'_1)B(y'_1)\right), S\circ B\circ S\left(\wtd{B}(x_2)B(y_2)\wtd{B}(x'_2)B(y'_2)\right)\right)\ (\textit{by \eqref{MM3}}).
\end{aligned}
\end{equation*}
On the other hand, we have
\begin{equation*}
\begin{aligned}
&C\left( (\wtd{B}(x),B(y))\right)C\left((\wtd{B}(x'),B(y'))\right)\\
=&\left(\wtd{B}\left(\wtd{B}(x_1)B(y_1)\right),S\circ B\circ S\left(\wtd{B}(x_2)B(y_2)\right)\right)\left(\wtd{B}\left(\wtd{B}(x'_1)B(y'_1)\right),S\circ B\circ S\left(\wtd{B}(x'_2)B(y'_2)\right)\right)\\
=&\left(\wtd{B}\left(\wtd{B}(x_1)B(y_1)\wtd{B}(x'_1)B(y'_1)\right), S\circ B\circ S\left(\wtd{B}(x_2)B(y_2)\wtd{B}(x'_2)B(y'_2)\right) \right)\ (\textit{by Proposition \ref{MRBE}}).
\end{aligned}
\end{equation*}
This shows that $C$ is a homomorphism of algebras.
As $B,\wtd{B}$ are both coalgebra maps, one can readily verify that $C$ is a coalgebra map. This proves that $C$ is a homomorphism of Hopf algebras. It follows from Proposition \ref{ATWB} that 
\begin{equation}\label{13}
\begin{aligned}
\wtd{C}\left(\wtd{B}(x),B(y)\right)
=&\left(\wtd{B}\circ S\left(B\circ S(y_1)B(x_1)\right),S\circ B\left(\wtd{B}(y_2)\wtd{B}\circ S(x_2)\right) \right)\\
=&\left(\wtd{B}\circ S\circ B(S(y_1)\ast_{B} x_1),S\circ B\circ \wtd{B}\left(y_2\ast_{\wtd{B}}S(x_2) \right) \right)\ (\textit{by Proposition \ref{AT}})\\
=&\left(\wtd{B}\circ S\circ B\circ S(y_1\ast_{\wtd{B}} S(x_1)),S\circ B\circ \wtd{B}\left(y_2\ast_{\wtd{B}}S(x_2) \right) \right)\ (\textit{by Proposition \ref{ATWB}}).\\
\end{aligned}
\end{equation}
Then by the proof of Lemma \ref{GY2} and \eqref{13}, we have
\begin{equation}\label{FE}
\begin{aligned}
C\left((\wtd{B}(x_1),B(y_1))\right)\wtd{C}\left(
(\wtd{B}(x_2),B(y_2))\right)=\wtd{C}\left(
(\wtd{B}(x_1),B(y_1))\right)C\left((\wtd{B}(x_2),B(y_2))\right)=(\wtd{B}(x),B(y)).
\end{aligned}
\end{equation}
Finally, it follows from Proposition \ref{CMM} that $(C,\wtd{C})$ is a projection homomorphism pair on $(H_+,H_-,\rhd,\ahd)$.
\end{proof}
By the definition of $C$ and $\wtd{C}$, one can check that 
$$
\begin{aligned}
\im C&= \left\{\left(\wtd{B}(x_1),S\circ B\circ S(x_2)\right)| x\in H  \right\}\\
\im\wtd{C}&=\left\{\left(\wtd{B}\circ S\circ B\circ S(x_1),S\circ B\circ \wtd{B}(x_2)\right)| x\in H  \right\}\\
\end{aligned}
$$

In the next theorem, we show that there is a Rota-Baxter Hopf algebra structure on the $\im C,$ which is Rota-Baxter isomorphic to $(H,B).$
\begin{thm}
Let $(H,B)$ be a Rota-Baxter Hopf algebra of weight $-1$ and $(H_+,H_-,\rhd,\ahd)$ be the matched pair of Hopf algebras on $(H,B).$ Let $(C,\wtd{C})$ be the projection homomorphism pair on $(H_+,H_-,\rhd,\ahd)$ and $H_1=\im C.$ Define $B_1:H_1\to H_1$ by
\begin{equation*}
B_1\left(x,y\right)
=C\left((x,\epsilon(y)1_H)\right),\ \forall (x,y)\in H_1,
\end{equation*}
and the operator $\wtd{B_1}:H_1\to H_1$ by
\begin{equation*}
\wtd{B_1}((x,y))=(x_1,y_1)B_1(S((x_2,y_2))),\ \forall (x,y)\in H_1.
\end{equation*}
Then $(H_1,B_1)$ is a Rota-Baxter Hopf algebra of weight $-1$ that is Rota-Baxter isomorphic to $(H,\wtd{B})$, and $(H_1,\wtd{B_1})$ is a Rota-Baxter Hopf algebra of weight $-1$ that is Rota-Baxter isomorphic to $(H,B).$
\end{thm}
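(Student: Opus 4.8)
The plan is to settle the two Rota--Baxter statements by invoking results already proved, and then to exhibit a single Hopf algebra isomorphism $\phi\colon H\to H_1$ that realizes both Rota--Baxter isomorphisms simultaneously. By Theorem~\ref{MT}, $(C,\wtd C)$ is a projection homomorphism pair on $(H_+,H_-,\rhd,\ahd)$, so Theorem~\ref{RBP} applies verbatim with $(C,\wtd C)$ in the role of $(B,B')$ and $H_1=\im C$ in the role of $K$; the operator it produces is exactly $B_1$, whence $(H_1,B_1)$ is a Rota--Baxter Hopf algebra of weight $-1$. Since $\wtd{B_1}$ is built from $B_1$ by the same recipe $z\mapsto z_1\,B_1(S(z_2))$ that produces $\wtd B$ from $B$ in \eqref{HBWBD}, Proposition~\ref{WTB} applied inside $H_1$ shows that $(H_1,\wtd{B_1})$ is a Rota--Baxter Hopf algebra of weight $-1$ as well.

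Next I would introduce $\phi\colon H\to H_1$, $\phi(x)=\bigl(\wtd B(x_1),\,S\circ B\circ S(x_2)\bigr)$, whose image is precisely $\im C$ by the description recorded just before the theorem, so that $\phi$ is surjective onto $H_1$. It is a coalgebra map because $\wtd B$ and $S\circ B\circ S$ are coalgebra maps, and it is multiplicative by Proposition~\ref{MRBE}; together with $\phi(1)=(1,1)$ --- which uses that $B(1)$ is grouplike and hence, by \eqref{RBHRS} at $x=y=1$, satisfies $B(1)^2=B(1)$, forcing $B(1)=1$ and likewise $\wtd B(1)=1$ --- this makes $\phi$ a homomorphism of Hopf algebras. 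Injectivity follows because the linear map $H_1\to H$, $(a,b)\mapsto ab$, is a left inverse of $\phi$ by \eqref{BTTB}. Hence $\phi$ is an isomorphism of Hopf algebras.

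It remains to compute how $\phi$ interacts with the operators. Unwinding definitions, $B_1(\phi(x))=C\bigl((\wtd B(x),1_H)\bigr)=\bigl(\wtd B(\wtd B(x_1)),\,S\circ B\circ S(\wtd B(x_2))\bigr)$, which equals $\phi(\wtd B(x))$ since $\wtd B$ is a coalgebra map; thus $\phi$ is a Rota--Baxter isomorphism $(H,\wtd B)\to(H_1,B_1)$. For the last claim I would observe that $\wtd{\wtd B}=B$, which is immediate from \eqref{HBWBD} and \eqref{HBWBC}; since $\wtd{B_1}$ and $\wtd{\wtd B}$ are produced from $B_1$ and $\wtd B$ by the same universal formula, and $\phi$ is a Hopf isomorphism intertwining $\wtd B$ with $B_1$, it automatically intertwines $B=\wtd{\wtd B}$ with $\wtd{B_1}$, giving the Rota--Baxter isomorphism $(H,B)\to(H_1,\wtd{B_1})$. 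The only steps that demand care are the Sweedler bookkeeping in the identity $B_1\circ\phi=\phi\circ\wtd B$ --- where the essential point is that $\phi$ pairs $\wtd B$ with $B_1$, not with $\wtd{B_1}$ --- and the small lemma $B(1)=1$; everything else is formal once Theorems~\ref{RBP} and~\ref{MT} and Proposition~\ref{MRBE} are in hand.
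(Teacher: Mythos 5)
Your proposal is correct and follows essentially the same route as the paper: the same isomorphism $\phi(x)=\bigl(\wtd B(x_1),\,S\circ B\circ S(x_2)\bigr)$, with multiplicativity from Proposition~\ref{MRBE}, injectivity via \eqref{BTTB}, the Rota--Baxter structures from Theorem~\ref{RBP} and Proposition~\ref{WTB}, and the intertwining computation $B_1\circ\phi=\phi\circ\wtd B$. Your explicit treatment of the second isomorphism via $\wtd{\wtd B}=B$ and the check that $B(1)=1$ are small welcome refinements of steps the paper leaves as ``similar'' or implicit, but they do not change the approach.
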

\begin{proof}
First, it follows from Proposition \ref{WTB} and Theorem \ref{RBP} that $B_1$ and $\wtd{B_1}$ are both Rota-Baxter operators of weight $-1$ on $H.$
Define $\phi:H\to H_1$
by $$\phi(x)=\left(\wtd{B}(x_1),S\circ B\circ S(x_2)  \right),\ \forall x\in H.$$
Then we prove that $\phi$ is bijective. If $\phi(x)=\left(\wtd{B}(x_1),S\circ B\circ S(x_2)\right)=(0,0),$ then it follows from \eqref{BTTB} that 
$x=\wtd{B}(x_1)S\circ B\circ S(x_2)=0$. This means that $\phi$ is injective.
It follows directly from the definition that $\phi$ is surjective. Then we prove that $\phi$ is a homomorphism of Hopf algebras. By the definition, it is straightforward to see that $\phi$ is a coalgebra map. By Proposition \ref{MRBE}, we have
\begin{equation*}
\begin{aligned}
\phi(x)\phi(y)=&\left(\wtd{B}(x_1),S\circ B\circ S(x_2)  \right)\left(\wtd{B}(y_1),S\circ B\circ S(y_2)  \right)=\left(\wtd{B}(x_1y_1),S\circ B\circ S(x_2y_2)  \right)
\end{aligned}
\end{equation*}
for any $x,y\in H.$ This shows that $\phi$ is a homomorphism of Hopf algebras. Finally, we show that $\phi$ is compatible with $\wtd{B}$ and $B_1.$ We have
$$\phi\circ \wtd{B}(x)=\left(\wtd{B}(\wtd{B}(x_1)),S\circ B\circ S(\wtd{B}(x_2))\right)=
C\left((\wtd{B}(x),\epsilon(y)1_{H})\right)=B_1\circ\phi(x).
$$
This proves that $(H_1,B_1)$ is Rota-Baxter isomorphic to $(H,\wtd{B}).$ It is similar to prove that $(H_1,B_2)$ is Rota-Baxter isomorphic to $(H,B).$

\end{proof}

In the next theorem, we study the relationship between the Rota-Baxter Hopf algebra structure on $H_2$ and the descendent Rota-Baxter Hopf algebra $(H_B,B).$ 
\begin{thm}\label{FI}
Let $(H,B)$ be a Rota-Baxter Hopf algebra of weight $-1$ and $(H_+,H_-,\rhd,\ahd)$ be the matched pair of Hopf algebras on $(H,B).$ Let $(C,\wtd{C})$ be the projection homomorphism pair on $(H_+,H_-,\rhd,\ahd)$ and $H_2=\im \wtd{C}.$ Define the operators $B_2:H_2\to H_2$ and $\wtd{B_2}:H'\bowtie H\to H'\bowtie H$, $\pi:H\to H_2$ by
\begin{equation*}
\begin{aligned}
B_2\left(x,y\right)
&=\wtd{C}\left((x,\epsilon(y)1_H)\right),\ \forall (x,y)\in H_2,\\
\wtd{B_2}((x,y))&=(x_1,y_1)B_2(S((x_2,y_2))),\ \forall (x,y)\in H_2\\
\pi(x)&=\left(\wtd{B}\left(  S(B(x_1))\right),S(B(\wtd{B}(S(x_2))))  \right),\ \forall x\in H
\end{aligned}
\end{equation*}
respectively.
Then $\pi$ is a surjective homomorphism of Rota-Baxter Hopf algebras from $(H_B,B)$ to $(H_2,B_2)$, and $\pi\circ S$ is a surjective homomorphism of Rota-Baxter Hopf algebras from $(H_{\wtd{B}},\wtd{B})$ to $(H_2,
\wtd{B_2}).$
\end{thm}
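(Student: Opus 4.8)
The plan is to verify the four assertions about $\pi$ in turn: it is a coalgebra map, it is an algebra map $H_B \to H_2$ (using the product $\ast_B$ on the source), it intertwines the Rota–Baxter operators $B$ and $B_2$, and it is surjective; then the statement for $\pi \circ S$ follows by an entirely parallel argument using Proposition~\ref{ATWB} and Proposition~\ref{BBCV}. Since $B$ and $\wtd B$ are coalgebra maps and $S$ is a coalgebra antihomomorphism (hence a coalgebra map in the cocommutative setting), the coalgebra compatibility of $\pi$ is immediate from the Sweedler expansion of $\Delta(\pi(x))$, and surjectivity follows by comparing the displayed description of $\im \wtd C$ with the formula for $\pi$: writing $z = S(x)$ one sees $\pi(x) = (\wtd B \circ S \circ B \circ S(z_1), S\circ B \circ \wtd B(z_2))$ up to reindexing, which is exactly the generic element of $\im \wtd C = H_2$. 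So the content is concentrated in the multiplicativity and the intertwining.

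For multiplicativity I would compute $\pi(x \ast_B y)$ by substituting $x \ast_B y = B(x_1)\,y\,S(B(x_2))\,x_3$ into the definition of $\pi$, then push $S\circ B$ and $\wtd B$ through using that $B$ is a Hopf homomorphism $H_B \to H$ (Proposition~\ref{BH}), that $S$ is an antihomomorphism, and the Rota–Baxter identity \eqref{RBHRS} together with \eqref{BTTB} to collapse the telescoping products; on the other side, $\pi(x)\pi(y)$ is expanded in $H_- \bowtie H_+$ using the double cross product multiplication and the formulas \eqref{MM1}, \eqref{MM2}, \eqref{MM3} for $\rhd$ and $\ahd$. The key simplifying input is that $\pi(x)$ lies in $\im \wtd C = H_2$, so the relevant products in $H_- \bowtie H_+$ behave like those analyzed in Lemma~\ref{GY2} and Proposition~\ref{MRBE}; in fact I expect $\pi(x)\pi(y)$ to reduce to $(\wtd B\circ S\circ B\circ S(w_1), S\circ B\circ \wtd B(w_2))$ where $w = S(x) \ast_{\wtd B} S(y)$, and then Proposition~\ref{ATWB} identifies $w$ with $S(x \ast_B y)$, matching $\pi(x\ast_B y)$. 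For the intertwining $\pi \circ B = B_2 \circ \pi$, I would evaluate $B_2$ on $\pi(x)$ using $B_2((a,b)) = \wtd C((a, \epsilon(b)1_H))$, apply the definition of $\wtd C$ from Theorem~\ref{MT}, and check it agrees with $\pi(B(x))$ after using $\wtd B \circ B$-type relations from Proposition~\ref{BBCV} and the identities \eqref{BTTB}; the counit $\epsilon(b)$ kills the second tensor slot and forces exactly the substitution that produces $\pi(B(x))$.

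The main obstacle will be the multiplicativity computation: it requires manipulating fairly deep Sweedler expressions in the double cross product $H_-\bowtie H_+$, where both the module structures $\rhd$, $\ahd$ and the antipode are intricate, and keeping the bookkeeping of the many Sweedler indices straight while applying \eqref{RBHRS}, \eqref{BTTB}, \eqref{MM3}, Lemma~\ref{GY2}, and Proposition~\ref{ATWB} in the right order. I would organize it by first reducing $\pi(x)\pi(y)$ to a product of two elements of the specific shape appearing in Lemma~\ref{GG1}/Lemma~\ref{GY2}, quoting those lemmas to avoid redoing the commutation and collapse, and only then invoking \eqref{13} and \eqref{FE} from the proof of Theorem~\ref{MT} to close the identification; this keeps the new verification short. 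The $\pi\circ S$ half should then be a formal consequence: $S$ is a Hopf isomorphism $H_{\wtd B} \to$ (a twist of $H_B$) by Proposition~\ref{ATWB}, $\wtd B$ plays the role of $B$ symmetrically, and $B_2$ versus $\wtd{B_2}$ is the $\wtd C$-analogue of the $C$-picture, so one only needs to transport the already-established statement along these symmetries.
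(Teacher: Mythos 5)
Your outline identifies the right checklist, and two of the four items are handled correctly: the coalgebra compatibility is indeed immediate, and your surjectivity argument (observing that $\pi(x)$ is exactly the generic element of $\im\wtd{C}$ indexed by $S(x)$, so surjectivity follows from bijectivity of $S$) is correct and in fact cleaner than the paper's, which instead verifies $\pi(S(x)\ast_B y)=\wtd{C}((\wtd{B}(x),B(y)))$. The intertwining $\pi\circ B=B_2\circ\pi$ also goes through essentially as you describe.

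The gap is in the multiplicativity step, which is the heart of the theorem. You propose to expand $\pi(x)\pi(y)$ in $H_-\bowtie H_+$ and to collapse the result by quoting Lemma~\ref{GY2} and Proposition~\ref{MRBE}. But neither of those covers the product you actually need: Proposition~\ref{MRBE} computes products of two elements of $\im C$, and Lemma~\ref{GY2} computes products of an element of $\im\wtd{C}$ with an element of $\im C$, whereas $\pi(x)\pi(y)$ is a product of two elements of $\im\wtd{C}$. So as written the computation is not reduced to anything already proved; you would have to carry out a fresh Sweedler computation of the same length as Proposition~\ref{MRBE} for the $\im\wtd{C}$ case. The paper avoids this entirely by noting the identity $\pi(x)=\wtd{C}\big((\epsilon(x_1)1_{H_-},B(x_2))\big)$ --- one substitution into the definition of $\wtd{C}$ --- and then using that $\wtd{C}$ is an algebra homomorphism (Theorem~\ref{MT}) together with $(1,B(x))(1,B(y))=(1,B(x)B(y))=(1,B(x\ast_B y))$ from \eqref{RBHRS}; multiplicativity of $\pi\colon H_B\to H_2$ is then a two-line consequence. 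Your own surjectivity observation already puts this factorization in your hands; making it explicit would close the gap and would also streamline your verification of $\pi\circ B=B_2\circ\pi$.
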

\begin{proof}
By Proposition \ref{WTB} and Theorem \ref{RBP}, we obtain that $B_2$ and $\wtd{B_2}$ are both Rota-Baxter operators of weight $-1$ on $H.$ Then we prove that $\pi$ is a homomorphism of Hopf algebras. For any $x,y\in H,$ we have
\begin{equation*}
\begin{aligned}
\pi(x)\pi(y)=&\wtd{C}((\epsilon(x_1)1_{H},B(x_2)))\wtd{C}((\epsilon(y_1)1_{H},B(y_2)))
=\wtd{C}((\epsilon(x_1)1_{H},B(x_2))(\epsilon(y_1)1_{H},B(y_2)))\\
=&\wtd{C}((\epsilon(x_{1}y_{1})1_{H},B(x_2y_2))
=\left(\wtd{B}\circ S\left(B(x_1\ast_B y_1)\right),S\circ B\left(\wtd{B}\circ S(x_2\ast_B y_2)\right) \right)
=\pi(x\ast_{B} y).
\end{aligned}
\end{equation*}
This proves that $\pi$ is a homomorphism of Hopf algebras. Next, we show that $\pi$ is surjective. For any $x,y\in H,$ we have 
$$
\begin{aligned}
\pi(S(x)\ast_B y)=&\left(  
\wtd{B}\left(S(B(S(x_1)\ast_B y_1))\right),S\left(B(\wtd{B}(S(S(x_2)\ast_B y_2)))\right)
\right)\\
=&\left(  
\wtd{B}(S(B(S(x_1)\ast_B y_1))),S\left(B(\wtd{B}(x_2\ast_{\wtd{B}} S(y_2)))\right)
\right)\ (\textit{by Proposition \ref{ATWB}})\\
=&\left(  
\wtd{B}\left(S(B(S(x_1)) B(y_1))\right),S\left(B(\wtd{B}(x_2)\wtd{B}(S(y_2)))\right)
\right)\ (\textit{by Proposition \ref{AT}})\\
=&\wtd{C}\left((\wtd{B}(x),B(y))\right).
\end{aligned}
$$
This shows that $\pi$ is surjective. Finally, we show that $\pi$ is compatible with $B$ and $B_2.$ We have
$$
\begin{aligned}
\pi\circ B(x)=&\left(\wtd{B}\left(  S(B(B(x_1)))\right),S\left(B\left(\wtd{B}(S(B(x_2)))\right)\right)\right)
=\wtd{C}\left(\wtd{B}(S(B(x_1))),\epsilon(x_2)\right)\\=&B_2\left(\wtd{B}\left(  S(B(x_1))\right),S\left(B\left(\wtd{B}(S(x_2))\right)\right)\right)=B_2(\pi(x)).
\end{aligned}
$$
Therefore, $\pi$ is a homomorphism of Rota-Baxter Hopf algebras from $(H_B,B)$ to $(H_2,B_2)$. It is similar to prove that $\pi$ is a surjective homomorphism of Rota-Baxter Hopf algebras from $(H_{\wtd{B}},\wtd{B})$ to $(H_2,
\wtd{B_2}).$
\end{proof}

\vspace{2mm}
\noindent
{\bf Acknowledgements. } This research is supported by Scientific Research Foundation for High-level Talents of Anhui University of Science and Technology (2024yjrc49).

\smallskip

\vspace{2mm}
\noindent
{\bf Declaration of interests. } The authors have no conflicts of interest to disclose.

\smallskip

\vspace{2mm}
\noindent
{\bf Data availability. } Data sharing not applicable to this article as no datasets were generated or analysed in this work.

\vspace{-.2cm}


\end{document}